\title{Optimal factor matchings for point processes on non-amenable unimodular graphs}
\author{Yinon Spinka \and Oren Yakir}
\date{January 2026}
  \crefname{theorem}{Theorem}{Theorems}
  \crefname{thm}{Theorem}{Theorems}
  \crefname{lemma}{Lemma}{Lemmas}
  \crefname{lem}{Lemma}{Lemmas}
  \crefname{remark}{Remark}{Remarks}
  \crefname{prop}{Proposition}{Propositions}
  \crefname{proposition}{Proposition}{Propositions}
  \crefname{notation}{Notation}{Notations}
  \crefname{claim}{Claim}{Claims}
  \crefname{observation}{Observation}{Observations}
  \crefname{defn}{Definition}{Definitions}
  \crefname{corollary}{Corollary}{Corollaries}
  \crefname{section}{Section}{Sections}
  \crefname{figure}{Figure}{Figures}
  \crefname{exercise}{Exercise}{Exercises}
    \crefname{assumption}{Assumption}{Assumptions}
\newtheorem{thm}{Theorem}[section]
\newtheorem{claim}[thm]{Claim}
\newtheorem{lemma}[thm]{Lemma}
\newtheorem{corollary}[thm]{Corollary}
\numberwithin{equation}{section}
\theoremstyle{definition}
\def\cU{\mathcal{U}}
\def\cP{\mathcal{P}}
\def\cO{\mathcal{O}}
\def\cI{\mathcal{I}}
\def\cG{\mathcal{G}}
\def\cC{\mathcal{C}}
\def\cB{\mathcal{B}}
\def\P{\mathbb{P}}
\def\E{\mathbb{E}}
\def\R{\mathbb{R}}
\def\Z{\mathbb{Z}}
\def\N{\mathbb{N}}
\def\R{\mathbb{R}}
\def\eps{\varepsilon}
\DeclareMathOperator{\dist}{dist}
\DeclareMathOperator{\diam}{diam}
\begin{document}
\maketitle

\begin{abstract}
Consider a unit-intensity point process $\Pi$ on the vertex set $V$ of a transitive non-amenable unimodular graph. We study invariant matchings between $\Pi$ and $V$ having small typical matching distances. When $\Pi$ is either a Poisson process or i.i.d.\ perturbations of the vertex set, we determine the optimal matching distance and show that it can be attained by a factor matching scheme (that is, a deterministic and equivariant function of $\Pi$).  
\end{abstract}

\section{Introduction}
\label{sec:intro}

Let $G = (V,E)$ be a locally finite transitive connected graph. A point process $\Pi$ on $G$ is a random multiset of vertices in $G$\footnote{All elements of the multiset are distinguishable. Formally, one may think of $\Pi$ as a set of the form $\{(v,i) : v \in V, 1 \le i \le \ell_v\}$, where $\ell_v \in \{0,1,\dots\}$ describes the number of occurrences of $v$.}. For $S\subset V$, we denote by $\Pi \cap S$ the points of $\Pi$ which are contained in $S$. We consider two types of point processes: the Poisson process and perturbed vertex sets. A \textbf{Poisson process} is a multiset $\Pi$ such that $\big\{|\Pi \cap \{v\}| \big\}_{v \in V}$ are independent Poisson(1) random variables.
A \textbf{perturbed vertex set} is a multiset of the form $\Pi=\{X_v : v \in V\}$, where $(X_v)_{v \in V}$ are independent random variables taking values in $V$, and $\gamma(X_v)$ has the same law as $X_{\gamma(v)}$ whenever $\gamma$ is an automorphism of $G$.
A natural example of a perturbed vertex set to have in mind is when each vertex $v\in V$ is moved to a uniformly random distance-$R_v$ neighbor, with $(R_v)$ i.i.d.\ $\N$-valued random variables. Both the Poisson process and the perturbed vertex set have laws which are invariant under the action of the automorphism group of $G$, and furthermore
$\E\big[|\Pi\cap \{v\}|\big] = 1$ for all $v\in V$.  
It is intuitively clear, and not hard to see, that the Poisson process is \emph{not} a perturbed vertex set.

In this work we study invariant matchings between $\Pi$ and $V$, or more generally, between $\Pi$ and $\Pi^\prime$, where $\Pi$ and $\Pi^\prime$ are independent and each is either a Poisson process or a perturbed vertex set. In our previous work~\cite{Elboim-Spinka-Yakir}, we dealt with analogous questions when $G= \Z^d$, see Section~\ref{subsec:related_works} below for more details. In the present work, we are interested in graphs with stronger expansion properties. Recall that $G=(V,E)$ is a \textbf{transitive} graph, meaning that for every two vertices $v_1,v_2\in V$ there exists an automorphism $\gamma$ of $G$ such that $\gamma(v_1) = v_2$. 
The \textbf{Cheeger constant} of $G$ is defined by
\begin{equation}
    \label{eq:cheeger_constant}
    h(G) = \inf\Big\{ \frac{|\partial A|}{|A|} \, : \, A\subset V \ \text{finite and non-empty} \Big\} \,  , 
\end{equation}
where $\partial A = \{ y\in V \, : \, d(y,A) = 1 \}$ is the external boundary of $A$.
The graph $G$ is called \textbf{amenable} if $h(G)=0$, and \textbf{non-amenable} if $h(G)>0$.
The \textbf{transition operator} is $\mathcal{P}:\ell^2(V) \to \ell^2(V)$ given by
\begin{equation*}
    (\mathcal{P}f)(v) = \frac{1}{d} \sum_{u\sim v} f(u) \, , 
\end{equation*}
As $\mathcal{P}$ is self-adjoint, it has real spectrum. The \textbf{spectral radius} $\rho$ of $\mathcal{P}$ is defined as
\[ \rho = \sup\{ |\lambda | \, : \, \lambda\not=1 \ \text{is an eigenvalue of } \mathcal{P} \} \, .\]
Kesten's criteria~\cite{Kesten} asserts that $G$ is non-amenable if and only if $\rho<1$. We say that a function $f:V\times V\to [0,\infty]$ is \textbf{diagonally invariant} if $f(v_1,v_2) = f(\gamma(v_1),\gamma(v_2))$ for any $v_1,v_2\in V$ and any automorphism $\gamma$. The graph $G$ is called \textbf{unimodular} if it satisfies the \textbf{mass transport principle}, that is for any diagonally invariant $f:V\times V\to [0,\infty]$ we have
\begin{equation}\label{eq:mass_transport_principle}\tag{MTP}
    \qquad \sum_{v\in V} f(v,x) = \sum_{v\in V} f(x,v) \qquad \text{for all } x\in V .
\end{equation}
In words, the total mass sent out of any given vertex $x$ is equal to the total mass received by $x$. Our main result will be stated for transitive non-amenable unimodular graphs. Some good examples to keep in mind for such graphs are the $d$-regular trees for $d\ge 3$, or any non-amenable Cayley graph with a finite set of generators. 

\subsection{Main result}

To state our main result we need to give some further definitions. A \textbf{matching} between two multisets $\Pi$ and $\Pi'$ in $V$ is an injective mapping $M \colon \Pi \to \Pi'$.
We call the matching \textbf{perfect} if $M$ is onto $\Pi'$, in which case, the inverse mapping $M^{-1} \colon \Pi' \to \Pi$ is a matching between $\Pi'$ and $\Pi$. An \textbf{invariant matching} between $\Pi$ and $\Pi'$ is a random matching $M : \Pi \to \Pi'$ such that the joint law of $(\Pi,\Pi',M)$ is invariant under the action of the automorphism group of $G$. We say that a perfect invariant matching $M:\Pi\to \Pi'$ is a \textbf{factor} (of $(\Pi,\Pi')$) if there exists a deterministic equivariant function $f$ such that $M = f(\Pi,\Pi')$ almost surely. By equivariant we mean that for any automorphism $T$ of $G$, we have that $f(T\circ(\Pi,\Pi'))=T\circ f(\Pi,\Pi')$ almost surely.

Our main result shows that there exists a factor perfect matching $M:\Pi\to \Pi^\prime$, with some properties. As a matter of fact, factor matchings need not exist for general graphs, as it may happen that the configuration seen from two different vertices is identical. We may avoid this technical obstruction in a simple and concrete manner by requiring that one can totally order the vertices in an equivariant manner. Specifically, we assume that
\begin{equation}\label{eq:assumption_total_order}
   \text{there exists an $\R$-valued factor}\footnote{Here, by factor we mean a factor of $(\Pi,\Pi')$. We note that if both point processes $\Pi,\Pi'$ are factors of a common Uni($[0,1]$)-valued i.i.d.\ process, and we are content with constructing our matching $M$ as a factor of this i.i.d.\ process, then the assumption becomes superfluous.}\text{ $(\cO_v)_{v \in V}$ so that a.s.\ $\{\cO_v\}$ are distinct. }  \tag{TO} 
\end{equation}
In Section~\ref{sec:sufficient_conditions_for_total_order} we give some sufficient conditions under which~\eqref{eq:assumption_total_order} holds. These conditions are easy to verify for $d$-regular trees and many other natural examples.

\begin{thm}
    \label{thm:factor_matching_for_transitive_non-amenable_graphs}
    Let $G$ be a connected transitive non-amenable unimodular graph and let $b_r$ denote the volume of a ball of radius $r$ in $G$. Let $\Pi$ be a perturbed vertex set or a Poisson process on $G$. Let $\Pi'$ be another such process (of either type), independent of $\Pi$.
    Assume that~\eqref{eq:assumption_total_order} holds.
    Then there exists a constant $c>0$ and a factor perfect matching $M$ between $\Pi$ and $\Pi'$ such that
    \[
    \E \big| \big\{ x \in \Pi \cap \{v\} : \dist(M(x),x) \ge r\}\big| \le \exp(-cb_r) 
    \]
    for all $v\in V$ and for all $r$ large enough.
\end{thm}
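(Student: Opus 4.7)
The plan is to build the factor matching $M$ by an iterative multi-scale scheme. Fix a sequence of radii $R_k = C^k$ with $C$ sufficiently large. At stage $k$ I would match as many currently unmatched pairs as possible within the same cell of a factor Voronoi partition at scale $R_k$, feeding any surviving unmatched points into the next stage. The goal is to show that the expected density of unmatched points after stage $k$ is at most $\exp(-c b_{R_k})$; then any point matched at distance $\ge r$ must have survived past the first $k$ with $R_k \ge r$, and the exponential growth of $b_r$ (guaranteed by $h(G)>0$) sums the geometric series to $\exp(-c' b_r)$.

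The first ingredient is a Chernoff-type concentration estimate: for any finite $S\subset V$,
\[
\P\Big(\big||\Pi\cap S|-|S|\big|\ge \eps|S|\Big) \le \exp(-c_\eps |S|).
\]
For the Poisson process this is classical; for a perturbed vertex set it follows from McDiarmid's bounded-differences inequality applied to the independent variables $(X_v)_{v\in V}$, after truncating the displacement range at a scale much smaller than $R_k$ and handling the truncation tail separately. The same estimate applies to $\Pi'$, independently of $\Pi$.

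The second ingredient is the construction of a factor partition at each scale $R$. Using the factor order $(\cO_v)_{v\in V}$ furnished by~\eqref{eq:assumption_total_order}, I would select a factor maximal $R$-separated subset $\cC\subset V$ and take its Voronoi decomposition; every cell $C$ has diameter $O(R)$ and volume $\Theta(b_R)$. Inside each cell I would invoke Hall's marriage theorem applied to the bipartite graph between $\Pi\cap C$ and $\Pi'\cap C$ with edges at distance $\le \diam(C)$: with probability $\ge 1 - \exp(-c b_R)$, Hall's condition holds up to a residual of size $O(\eps b_R)$, which yields a matching of all but a vanishing fraction of the points in $C$. Ties are broken canonically by $(\cO_v)$ so that the whole construction is a factor. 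The residuals of $\Pi$ and $\Pi'$ are then handed to stage $k+1$, and the mass transport principle~\eqref{eq:mass_transport_principle} ensures that their densities remain equal throughout.

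The main obstacle is verifying Hall's condition inside a cell: the naive union bound over $2^{|C|}$ subsets $A\subset C$ is too weak, since each single-set failure probability is already only $\exp(-c b_R)$. The resolution is to exploit non-amenability quantitatively --- any violating $A$ must have $|\partial A|\ge h|A|$, so that the effective number of ``minimal bad subsets'' grows only polynomially per unit volume in a transitive graph --- thereby bringing the union bound under control. Assuming this step, perfectness of $M$ in the limit follows from the MTP applied to the residual unmatched density, which is equal on the two sides and decays to zero exponentially fast, giving the claimed bound on $\E|\{x\in \Pi\cap\{v\}:\dist(M(x),x)\ge r\}|$.
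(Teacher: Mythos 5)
Your scheme has a fundamental obstruction that the paper's construction is specifically designed to avoid: the discrepancy in cell counts. In a fixed Voronoi cell $C$ at scale $R$, you have $|C| \asymp b_R$, and the counts $|\Pi \cap C|$ and $|\Pi' \cap C|$ are independent (or at least not coupled), each fluctuating by $\Theta(\sqrt{|C|})$. Even if Hall's condition held perfectly on the bipartite graph inside $C$, the maximum matching leaves $\big||\Pi\cap C| - |\Pi'\cap C|\big|$ points unmatched, and $\E\big||\Pi\cap C|-|\Pi'\cap C|\big| = \Theta(\sqrt{b_R})$. So the density of residuals fed into stage $k+1$ is $\Theta\big(1/\sqrt{b_{R_k}}\big)$, which is \emph{polynomially} small in $b_{R_k}$, not the $\exp(-c\, b_{R_k})$ you assert. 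Tracing through your own bookkeeping, a point matched at distance $\approx r = R_{k+1}$ has probability $\approx 1/\sqrt{b_{R_k}} \approx (1+h)^{-r/(2C)}$, which is exponentially small in $r$ but nowhere near the target $\exp(-c\,b_r) \approx \exp\big(-c(1+h)^r\big)$, a doubly exponential quantity. The residual density claim is simply false, and no choice of $R_k$ or refinement of the in-cell matching can repair it, because the discrepancy is an unavoidable feature of fixing the cells before looking at the points.

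The paper sidesteps this entirely by \emph{not} committing to deterministic cells: it builds a bipartite graph $\cG$ on $\Pi \sqcup \Pi'$ in which the radius $R_x$ of each point is chosen adaptively (see the definition in~\eqref{eq:def_of_R_v_in_nonamenable_case}) so that locally overcrowded regions of $\Pi$ are automatically given long enough edges to reach an expanding region of $\Pi'$. This gives the boosted expansion bound $p(N_\cG(A)) \ge \min\{2p(A),\tfrac45\}$ of \cref{lemma:boosted_hall} for all factor subsets $A$ simultaneously, and the Lyons--Nazarov chain-flipping algorithm then converts this into a perfect matching with geometrically decaying unmatched density $p_n \le 2^{-n}$; the matching distance tails are controlled directly through $\P(R_v > r) \le \exp(-c\,b_r)$ (\cref{lemma:tail_bound_for_non-amenable_R}), not through cell-level discrepancies. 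Your secondary worry about the union bound over $2^{|C|}$ subsets is also not resolved by the gesture toward ``minimal bad subsets growing polynomially'' --- on a transitive graph the number of $r$-connected sets of size $k$ through a fixed vertex grows like $e^{Crk}$, exponentially in $k$ --- and the paper has to work to beat this (using $|U^{+r}| \ge (1+h)^r|U|$ so that $e^{-c(1+h)^r k}$ dominates $e^{Crk}$ once $r_0$ is large enough; see the proof of \cref{lemma:tail_bound_for_non-amenable_R}). But the discrepancy issue is the fatal one: a stage-by-stage match-within-fixed-cells approach simply cannot reach the $\exp(-cb_r)$ tail.
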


\noindent
Curiously, our proof of \cref{thm:factor_matching_for_transitive_non-amenable_graphs} does not use the independence between $\Pi$ and $\Pi'$ in any meaningful way, and the result holds as long as the two processes are factors of a common i.i.d.\ process.
Let us also mention that the constant $c$ in the theorem does not depend on the point processes $\Pi$ and $\Pi'$, and in fact, depends on the graph $G$ only through its degree, its Cheegar constant, and its spectral radius.

\subsection{Optimal tail for matching distance}
To better explain the content of Theorem~\ref{thm:factor_matching_for_transitive_non-amenable_graphs}, we suppose for a moment that $\Pi = V$ is simply the vertex set itself (note that $V$ can be thought of as a perturbed vertex set, where the perturbation is degenerate). 
Given an invariant matching $M$ between $V$ and $\Pi'$, we consider the matching distance $\dist(M(v),v)$, whose distribution does not depend on $v$. 
We shall be interested in its tail behavior, namely, 
\begin{equation} \label{eq:intro_tail_for_M_v}
    \P \big(\dist(M(v),v) \ge r\big) 
\end{equation}
as $r\to \infty$. 
What is the optimal tail behavior for the matching distance?
To gain some insight into this, consider the ``hole probability'' for $\Pi'$, given by
\[ h(r) := \P\big(|\Pi'\cap B_r(v)| = 0\big) .\]
This provides a lower bound on the matching distance under any invariant matching $M$ between $V$ and $\Pi^\prime$. Indeed, when there are no points of $\Pi'$ in a ball of radius $r$ around $v$, it must be the case that $v$ is matched to a point at distance greater than $r$. We thus have, 
$$\P\big(\dist(M(v),v)>r\big) \ge h(r).$$
For the Poisson process we obviously have $h(r) = \exp(-b_r)$, where $b_r=|B_r(v)|$. We also note that for many perturbed vertex sets we have 
\[ h(r) = e^{-\Theta(b_r)} \qquad\text{as } \ r \to \infty .\]
Indeed, the upper bound always holds, but $h(r)$ could be smaller, e.g., if there is no perturbation at all, or if the perturbation is of bounded distance.
Theorem~\ref{thm:factor_matching_for_transitive_non-amenable_graphs} shows that this tail behavior is achievable by an invariant perfect matching, which is furthermore a factor of the point process $\Pi'$. We note that every perturbed vertex set admits a canonical matching, obtained by moving each perturbed point $X_v$ back to its original position $v$. In general, the matching distance tail for this canonical matching does \emph{not} achieve the hole probability, and the matching given by Theorem~\ref{thm:factor_matching_for_transitive_non-amenable_graphs} is in fact better behaved. 

In the general situation, when $\Pi$ is also random, given an invariant matching $M$ between $\Pi$ and $\Pi'$, there may be many points (or no point) of $\Pi$ at a given $v \in V$, and it no longer makes sense to consider the matching distance $\dist(M(v),v)$. In this case, the quantity of interest is
\[ \E |\{ x \in \Pi \cap \{v\} : \dist(M(x),x) \ge r\}| ,\]
which again does not depend on $v$. Note that this is equal to~\eqref{eq:intro_tail_for_M_v} in the case $\Pi=V$.

\subsection{Related works}
\label{subsec:related_works}

While we do not dive into the details on how we construct the matching just yet, we remark that a key construction in our proof of Theorem~\ref{thm:factor_matching_for_transitive_non-amenable_graphs} is largely inspired by the work of Lyons and Nazarov~\cite{lyons2011perfect}. 
In that paper, they prove that for any bipartite non-amenable Cayley graph there exists an invariant perfect matching which is a factor of i.i.d.\ uniform random variables on $[0,1]$. In our proof, we construct a (random) bipartite graph between $\Pi$ and $\Pi^\prime$, and show that, after suitable modifications, the Lyons--Nazarov algorithm for constructing a matching yields the desired matching for us as well; see Section~\ref{sec:lyons-nazarov_matching_algo} below for a concrete description of the algorithm. 
We note that a big chunk of our work goes into establishing desirable properties of this random bipartite graph, whereas in~\cite{lyons2011perfect} similar properties where automatic as the bipartite graph was apriori given.
We also mention that Cs\'oka and Lippner~\cite{Csoka-Lippner} extended the Lyons--Nazarov result to non-amenable Cayley graphs which are not necessarily bipartite. Related problems in the context of Borel graphs and graphings have also been studied (see, e.g., \cite{bowen2021perfect,kastner2023baire,kun2021measurable,marks2016baire} and references therein).

A motivation for studying optimal matchings between point processes on non-amenable graphs comes from our recent work~\cite{Elboim-Spinka-Yakir}, where we addressed a related problem on the Euclidean lattice~$\Z^d$. In~\cite{Elboim-Spinka-Yakir}, we proved that under mild assumptions on the perturbation, one can construct an invariant perfect matching between the perturbed vertex set and the lattice points in $\Z^d$ with optimal tail bounds on the matching distance. In contrast, for the Poisson point process the analogous question is well known to be delicate and strongly dependent on the lattice dimension $d \ge 1$; see~\cite{Hoffman-Holroyd-Peres, Holroyd-Pemantle-Peres-Schramm, timar2023factor}. Moreover, it is still unknown whether a \emph{factor} matching exists between $\Z^d$ and its random perturbations, see discussions in~\cite[Section~4.1]{Elboim-Spinka-Yakir}.

Our Theorem~\ref{thm:factor_matching_for_transitive_non-amenable_graphs} shows that, in the non-amenable setting, one can indeed construct an invariant matching that both achieves optimal tails and is a factor of the underlying point processes. In this setup there is no behavioral distinction between the Poisson process and perturbed vertex sets, contrary to the Euclidean case. We mention that a discussion in Lyons~\cite{lyons-CPC} suggests that factor problems can sometimes become harder in non-amenable settings. 

We conclude the introduction by mentioning the recent surge of interest in studying point processes on trees and other hyperbolic spaces, as considered in this work. We refer the interested reader to~\cite{Benjamini-Krauz-Paquette, Bjorklund-Bylehn, Bylehn} and references therein.

\subsubsection*{Acknowledgments}
We thank Dor Elboim for very helpful discussions.
The research of Y.S. is supported in part by ISF grant 1361/22. The research of O.Y. is supported in part by NSF grant DMS-2401136. 

\section{A random bipartite graph with good expansion}
\label{sec:preparations}

Recall that $G$ is a connected, transitive, non-amenable, unimodular graph on vertex set $V$. In particular, it satisfies the mass transport principle~\eqref{eq:mass_transport_principle}.
Throughout, $\Pi$ is either the Poisson process on $G$ or a randomly perturbed vertex set, as described in the Section~\ref{sec:intro}, and $\Pi'$ is another such process (of either type). We do not require that $\Pi$ and $\Pi'$ are independent, but rather only assume that both $\Pi$ and $\Pi'$ are factors of some common i.i.d.\ process $\cI$ on $V$ (which is clearly the case when the two processes are independent).

\subsection{The bipartite graph}
\label{subsec:bipartite_graph}

We shall define a random bipartite directed graph $\cG$ on $\Pi \sqcup \Pi'$.
The directed edges will not play an essential role (later we simply forget the directions and consider the underlying undirected graph), but arise naturally in the construction, and we keep them for now as they also serve an instructional purpose. This graph will be defined via two $\N$-valued processes $\{R_v\}_{v\in V}$ and $\{R'_v\}_{v\in V}$, which can be thought of as the invariant maximal lengths for the matching $M$ that we eventually construct. Given these processes, we define $\cG$ as follows: 
\begin{itemize}
    \item there is a directed edge from $x\in \Pi$ to $x'\in \Pi'$ whenever $\text{dist}(x,x') \le R_x$; and 
    \item there is a directed edge from $x'$ to $x$ whenever $\text{dist}(x,x') \le R'_{x'}$.
\end{itemize}
Here and throughout, we slightly abuse the notation and view $x\in \Pi$ (or in $\Pi^\prime$) also as a vertex in $V$, and work with the graph distance.
Our matching $M$ will be a matching of the undirected graph underlying $\cG$, i.e., each $\{x,M(x)\}$ will be an edge of this graph. In particular, 
\begin{equation}
\label{eq:bound_on_matching_distance_by_r}
\dist(x,M(x)) \le \max\big\{R_x,R'_{M(x)}\big\} \qquad \text{for} \ x\in \Pi \, .    
\end{equation}
All definitions will be deterministic and equivariant functions of $(\Pi,\Pi')$, so that the obtained $M$ will indeed be a factor matching. The random variables $R_v$ and $R'_v$ will have the correct tails, so that the matching distance will satisfy the desired bound.

\noindent
We start by defining some parameters that will be used in the construction. Recall that $B_r(v)$ denotes the ball of radius $r$ around the point $v\in V$. We also write $b_r = |B_r(v)|$ for the size of the ball, and $d$ for the degree of $v$ (both, by transitivity, do not depend on $v\in V$). Let $h>0$ be the Cheeger constant of $G$, and let $\rho \in (0,1)$ be the spectral radius of $G$. Let $r_0$ be a sufficiently large even integer, depending only on the degree, Cheegar constant and spectral radius of $G$, and set
\begin{equation}
    \label{eq:def_of_bad_points}
    \cB := \{ v\in V \, : \, |\Pi' \cap B_{r_0/2}(v)| \le 0.9 \,  b_{r_0/2} \} \, . 
\end{equation}
For $v\in V$, we denote by $\cC_r(v)$ the collection of all finite $r$-connected sets\footnote{A subset $S \subset V$ is called $r$-connected if any two $u,v \in S$ are linked by a sequence $u=w_0,\dots,w_n=v$ such that $w_i \in S$ and $\dist(w_i,w_{i-1}) \le r$ for all $1 \le i \le n$. } containing $v$. We write $U^{+r}$ for the ball of radius $r$ around a set $U \subset V$. We now define $\{R_v\}_{v\in V}$ as follows: 
\begin{align}\label{eq:def_of_R_v_in_nonamenable_case} \nonumber
    R_v := r_0 \quad  &\text{whenever \  $\cB \cap B_{r_0/2}(v) = \emptyset$ \  and \  $|\Pi \cap \{v\}| \le r_0$};  \quad \text{and otherwise} \\
R_v &:= \min\Big\{ r>r_0 \, : \, |\Pi'\cap U^{+r}| \ge r |\Pi \cap U| \ \text{for every } U\in \cC_{4r}(v) \Big\} \, .
\end{align} 
\noindent We define $\{R'_v\}_{v \in V}$ analogously, by exchanging the roles of $\Pi$ and $\Pi'$.
This completes the definition of the bipartite graph $\cG$.

We now state two technical lemmas (we state these for $R_v$, but by symmetry they hold also for $R'_v$).
The first lemma will imply that our matching has the correct distance tails, as seen from~\eqref{eq:bound_on_matching_distance_by_r}.
The second lemma will allow us to use expansion properties of finite subsets in $G$.

\begin{lemma}
    \label{lemma:tail_bound_for_non-amenable_R}
     For any $r>r_0$, we have $\P(R_v > r) \le \exp(-cb_r)$.
\end{lemma}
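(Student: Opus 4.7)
My plan is a union bound over $U \in \cC_{4r}(v)$, combining the non-amenability of $G$ with Chernoff-type concentration for $|\Pi \cap U|$ and $|\Pi' \cap U^{+r}|$.

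For $r > r_0$, the event $\{R_v > r\}$ forces the ``otherwise'' clause in~\eqref{eq:def_of_R_v_in_nonamenable_case} with defining minimum exceeding $r$, so the feasibility condition must fail at $r' = r$. This yields some $U \in \cC_{4r}(v)$ with $|\Pi' \cap U^{+r}| < r|\Pi \cap U|$, and hence
\[
    \P(R_v > r) \;\le\; \sum_{U \in \cC_{4r}(v)} \P(E_U), \qquad E_U := \{|\Pi' \cap U^{+r}| < r |\Pi \cap U|\}.
\]
To estimate $\P(E_U)$ I would split: $E_U$ implies either $|\Pi' \cap U^{+r}| \le |U^{+r}|/2$ or $|\Pi \cap U| > |U^{+r}|/(2r)$. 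The first is a standard Chernoff lower deviation (valid for both point-process models, as $|\Pi' \cap U^{+r}|$ is a sum of independent indicators with mean $|U^{+r}|$), giving $\le \exp(-c_1|U^{+r}|)$. For the second, iterating $|\partial A| \ge h|A|$ yields $|U^{+r}| \ge (1+h)^r|U|$, so the deviation ratio $\beta := |U^{+r}|/(2r|U|)$ is at least $(1+h)^r/(2r)$, which is $\gg e$ once $r_0$ is taken large; the Chernoff upper tail then gives $\P(|\Pi \cap U| > \beta|U|) \le \exp(-|U|\beta\log\beta) \le \exp(-c_2|U^{+r}|)$. Hence $\P(E_U) \le 2\exp(-c|U^{+r}|)$ for some $c>0$ depending only on $G$.

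The counting step uses that $4r$-connected subsets of $V$ are exactly the connected subsets of the auxiliary graph $G^{(4r)}$ in which vertices within graph distance $4r$ are made adjacent; its maximum degree is at most $b_{4r}$, and a standard enumeration gives $|\{U \in \cC_{4r}(v) : |U|=k\}| \le (Cb_{4r})^k$. Crucially, $|U^{+r}| \ge \max\{b_r,(1+h)^r|U|\}$: the first inequality holds because $v \in U$ forces $B_r(v) \subseteq U^{+r}$, while the second is the iterated Cheeger bound. Thus
\[
    \P(R_v > r) \;\le\; 2\sum_{k \ge 1} (Cb_{4r})^k \exp\bigl(-c\max\{b_r,(1+h)^rk\}\bigr).
\]

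Finally I would split the $k$-sum at the crossover $k_0 := \lceil b_r/(1+h)^r\rceil$. For $k \le k_0$ the exponent is $-cb_r$ and the accumulated entropy is $k_0\log(Cb_{4r}) = O(rb_r/(1+h)^r) = o(b_r)$, since $(1+h)^r$ grows faster than any polynomial in $r$. For $k > k_0$ the series is geometric with ratio $Cb_{4r}\exp(-c(1+h)^r) \ll 1$ for $r$ large (using $\log b_{4r} = O(r)$), and its leading term at $k\approx k_0$ again contributes $\exp(-c'b_r)$. Both regimes yield $\le \exp(-c''b_r)$, proving the claim after enlarging $r_0$ if necessary. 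The main subtle point is obtaining the final decay in terms of $b_r$ rather than merely $(1+h)^r$: this requires keeping the trivial lower bound $|U^{+r}| \ge b_r$ active alongside the Cheeger bound, without which one would only get $\exp(-c(1+h)^r)$, potentially weaker than the hole-probability order $\exp(-cb_r)$.
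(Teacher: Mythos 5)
Your proof is correct and follows essentially the same route as the paper's: a union bound over $U\in\cC_{4r}(v)$, splitting the event $|\Pi'\cap U^{+r}|<r|\Pi\cap U|$ into an undercrowding event for $\Pi'$ on $U^{+r}$ and an overcrowding event for $\Pi$ on $U$ (each controlled by a Chernoff-type bound and the iterated Cheeger inequality $|U^{+r}|\ge(1+h)^r|U|$), then counting $4r$-connected sets of size $k$ by $e^{O(rk)}$ and splitting the resulting $k$-sum at $k\approx b_r/(1+h)^r$ while keeping both lower bounds $|U^{+r}|\ge b_r$ and $|U^{+r}|\ge(1+h)^r|U|$ in play. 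The paper factors the probability estimate into a separate claim (Claim~\ref{claim:prob_bound_for_bad_point_count_in_U}), but the content matches yours up to constant choices. One small imprecision: for the Poisson process, $|\Pi'\cap U^{+r}|$ is a sum of independent Poisson(1) variables rather than indicators, so you should phrase the lower-tail concentration as a Poisson Chernoff bound (or approximate by Bernoullis, as the paper does); this does not affect the result.
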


\begin{lemma}
    \label{lemma:almost_sure_no_infinite_connected_comp_for_R_large}
     For any $r \ge r_0$, the set $\{v\in V\, : \, R_v > r\}$ almost surely contains no infinite $4r$-connected component.
\end{lemma}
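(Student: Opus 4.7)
The plan is to show that for every vertex $v$, the $4r$-connected component of $v$ in $\{u : R_u > r\}$ is finite almost surely; by transitivity this yields the lemma. Since $R_u > r$ requires a bad witness at every scale $r' \in (r_0, r]$, we have $\{u : R_u > r\} \subseteq B_r := \{u : \exists U \in \cC_{4r}(u) \text{ with } |\Pi' \cap U^{+r}| < r|\Pi \cap U|\}$, and it suffices to show that $B_r$ has no infinite $4r$-connected component almost surely.

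The key structural observation for $B_r$ is: if a finite $4r$-connected $U \ni v$ is \emph{bad}, meaning $|\Pi' \cap U^{+r}| < r|\Pi \cap U|$, then $U \in \cC_{4r}(u)$ for every $u \in U$, so the same $U$ witnesses $u \in B_r$. Hence $U \subseteq B_r$, and since $U$ is $4r$-connected, it lies in the $4r$-component of $v$ in $B_r$. In particular, the $4r$-component $C(v) \subseteq B_r$ of $v$ equals the union of chosen bad witnesses $U_u \subseteq C(v)$ for its vertices.

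The main probabilistic input is a large-deviation bound. Non-amenability gives $|W^{+r}| \ge (1+h)^r |W|$ for any finite $W \subseteq V$, so for $r_0$ large enough that $(1+h)^{r_0}$ dominates $r_0$, Chernoff/Bernstein tail estimates yield $\P(W \text{ bad}) \le \exp(-c(1+h)^r |W|)$. Combined with the entropy bound $(eb_{4r})^n$ on the number of $4r$-connected subsets of $V$ of size $n$ containing $v$, and taking $r_0$ large so that $c(1+h)^{r_0}$ dominates $\log(eb_{4r_0})$, a union bound yields
\[
\P\bigl(\exists \text{ bad } W \ni v \text{ with } |W| \ge n\bigr) \le \exp(-c' n).
\]

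The final step is to convert this into the component bound $\P(|C(v)| \ge n) \to 0$. Starting from $W_0 := U_v$, one attempts to grow a $4r$-connected bad $W \ni v$ of size $\ge n$ by iteratively appending more bad witnesses $U_{u_k} \subseteq C(v)$ with $u_k$ lying $4r$-adjacent to the current $W_{k-1}$. The main obstacle is that the union of bad sets need not itself be bad, due to overcounting of $\Pi$-mass on overlaps. I would circumvent this by working with a relaxed $M$-bad notion, $|\Pi' \cap W^{+r}| < rM|\Pi \cap W|$, which satisfies the same large-deviation bound up to constants (after adjusting $r_0$), and by bounding the overlap multiplicity by a constant via a separate tail estimate on witness sizes (itself a consequence of the estimates above). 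This yields $\P(|C(v)| \ge n) \le \exp(-c'' n) \to 0$, and the lemma follows.
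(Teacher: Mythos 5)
Your high-level skeleton --- a Chernoff bound showing a fixed $4r$-connected $W$ is ``bad'' with probability $\le e^{-c(1+h)^r|W|}$, an $e^{Crn}$ entropy bound on $4r$-connected sets of size $n$ through $v$, and a union bound --- is indeed the same engine the paper runs. But the proposal has two genuine gaps. First, the reduction $\{u : R_u > r\} \subseteq B_r$ fails at $r=r_0$: by~\eqref{eq:def_of_R_v_in_nonamenable_case}, $R_u > r_0$ is equivalent to $\cB \cap B_{r_0/2}(u) \neq \emptyset$ or $|\Pi \cap \{u\}| > r_0$, and neither condition supplies a bad $4r_0$-connected witness containing $u$ (for instance, $u$ may lie near $\cB$ while $|\Pi \cap \{u\}| = 0$, in which case $\{u\}$ is certainly not bad, and no other $U \ni u$ need be). Since the lemma is stated for all $r \ge r_0$, and the paper devotes a separate argument to $r = r_0$ (tracking visits to $\cB$ or to high-multiplicity sites along a path), your proof needs an independent treatment of this case.

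Second, and more importantly, the final step is not actually carried out, and the proposed fix does not close it. You want $\cU := C(v) = \bigcup_{u \in C(v)} U_u$ to be $M$-bad for a constant $M$ bounding the overlap multiplicity $\sup_w |\{u : w \in U_u\}|$. But if $w \in U_u$ then all that is forced is $\dist(u,w) \le \diam(U_u) \le 4r|U_u|$; even under a cap $|U_u| \le K$ this only bounds the multiplicity by $b_{4rK}$, which is far from constant (and $K$ is not a.s.\ bounded in any case). Worse, the $rM$-bad analogue of \cref{claim:prob_bound_for_bad_point_count_in_U} needs $rM$ to be dominated by the expansion factor $(1+h)^r$, and $b_{4rK} \ge (1+h)^{4rK}$ already defeats this for any $K \ge 1$. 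The paper avoids overlaps altogether: it bounds the \emph{diameter} of the component rather than its cardinality, and uses the purely combinatorial \cref{claim:maximal_shortest_path} to extract from the family of bad witnesses a $4r$-\emph{separated} subfamily $\{U_{j_k}\}$ with controlled gaps between consecutive sets. Separation makes the $r$-neighborhoods disjoint, so the union is exactly bad (no $M$ is needed), while the controlled gaps are precisely what make the union bound over ``chain shapes'' in~\eqref{eq:bound_on_tail_of_diameter_connected_R_componenet_after_union_bound} converge. That extraction step is the missing ingredient in your sketch; without it or some substitute, the passage from ``a large bad set through $v$ is unlikely'' to ``a large $4r$-component at $v$ is unlikely'' is not justified.
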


We postpone the proofs of these lemmas to Section~\ref{sec:proof_of_lemmas}.

\subsection{The expansion}

Recall that $\Pi$ and $\Pi'$ are factors of an i.i.d.\ process $\cI$.
Let $A\colon V\to \Z_{\ge 0}$ be a factor of $\cI$. When $A$ takes only the values $\{0,1\}$, we will slightly abuse the notation and think of $A$ as a random subset of $V$. For general $A$, we think of $A$ as describing the number of points in a random multiset of $V$. We denote $p(A) = \E[A_v]$, which does not depend on $v\in V$. 
Note that $p(V)=p(|\Pi|)=p(|\Pi'|)=1$, where $|\Pi|_v := |\Pi \cap \{v\}|$ for $v\in V$. To lighten on the notation, when $\Lambda$ is a point process on $G$ (e.g.\ $\Pi$, $\Pi^\prime$ or a subset of these) we will slightly abuse the notation and write $p(\Lambda)$ for $p(|\Lambda|)$. 

Recall the definition of the bipartite directed graph $\cG$ on $\Pi\sqcup\Pi'$. For $A\subset \Pi\sqcup\Pi'$, we denote by $N_{\cG}(A) \subset \Pi\sqcup\Pi'$ the neighbors of vertices from $A$ in $\cG$. In particular, $N_{\cG}(A) \subset \Pi$ when $A \subset \Pi'$, and $N_{\cG}(A) \subset \Pi'$ when $A \subset \Pi$. The next lemma provides the engine for the proof of \cref{thm:factor_matching_for_transitive_non-amenable_graphs}, in which we shall construct a perfect matching in $\cG$. 
\begin{lemma}
    \label{lemma:boosted_hall}
    Let $A\subset \Pi$ or $A \subset \Pi'$ be a factor of the i.i.d.\ process $\cI$. Then
    \[
    p\big(N_\cG(A)\big) \ge \min\big\{2 \cdot p(A),\, \tfrac45 \big\} \, .
    \]
\end{lemma}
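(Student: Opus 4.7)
The plan is to prove the expansion estimate via a mass-transport argument that converts the \emph{per-vertex} expansion built into the definitions of $R_v$ and $R'_v$ into an expansion in density. By the symmetric way $\cG$ is constructed (swap the roles of $\Pi,\Pi'$ and $R,R'$), I may assume throughout that $A\subset \Pi$.

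First I would record the two usable forms of expansion near a point $v$. If $R_v=r_0$, then $\cB\cap B_{r_0/2}(v)=\emptyset$, so every $u\in B_{r_0/2}(v)$ has $|\Pi'\cap B_{r_0/2}(u)|>0.9 b_{r_0/2}$; in particular, $|\Pi'\cap B_{r_0}(v)|\ge 0.9\, b_{r_0/2}$, which is huge in a non-amenable graph. If instead $R_v>r_0$, then already the singleton $U=\{v\}$ lies in $\cC_{4R_v}(v)$, so $|\Pi'\cap B_{R_v}(v)|\ge R_v\, |\Pi\cap\{v\}|\ge R_v\ge r_0$. In both cases, each point of $A$ located at $v$ has at least $r_0$ out-neighbours in $\cG$, and in fact the out-neighbourhood is the entire $\Pi'$-content of a ball whose radius $R_v$ was chosen to guarantee multiplicative $\Pi'$-growth over \emph{every} finite $4R_v$-connected set containing $v$.

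Next I would set up the covering/clustering step, which is where the genuine factor-$2$ expansion comes from. Fix a large scale $r\ge r_0$. Partition $\mathrm{supp}(A)\subset V$ into its (random) $4r$-connected components $\{C_i\}$. By \cref{lemma:almost_sure_no_infinite_connected_comp_for_R_large}, the set $\{v:R_v>r\}$ has no infinite $4r$-connected component a.s., which together with \cref{lemma:tail_bound_for_non-amenable_R} lets me argue that, up to a negligible loss in density, each cluster $C_i$ contains a witness $v_i$ with $R_{v_i}\le r$. Since $v_i\in C_i\in\cC_{4r}(v_i)$, the definition of $R_{v_i}$ yields
\[
|\Pi'\cap C_i^{+r}|\;\ge\;r\,|\Pi\cap C_i|\;\ge\;r\,|A\cap C_i|.
\]
Because the $C_i$ are \emph{maximal} $4r$-connected components, the enlarged sets $C_i^{+r}$ are pairwise disjoint (any two lie at distance $>2r$), so these expansions add without overlap.

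The third step is to feed the geometric expansion into $\cG$ and compute densities via the mass transport principle. For each surviving cluster $C_i$ I would declare a transport that spreads unit mass from each $x\in A\cap C_i$ uniformly among the $\ge r$ points of $\Pi'\cap C_i^{+r}$ that are reachable from $C_i$ inside $\cG$ (using the $R_{v_i}\le r$ witness together with the in-edges coming from $R'$-values, which are comparably good by the same Lemmas applied to $\Pi'$). Applying MTP~\eqref{eq:mass_transport_principle} to this diagonally-invariant transport gives
\[
p(A)\;=\;\E\!\!\sum_{w}\!f(o,w)\;=\;\E\!\!\sum_{v}\!f(v,o)\;\le\;\tfrac{1}{r}\,p\bigl(N_\cG(A)\bigr)+o(1),
\]
so for $r$ chosen large enough (but still a constant depending only on the graph), $p(N_\cG(A))\ge 2p(A)$. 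The saturation at $\tfrac45$ is then a separate easy check: once $p(A)$ is large, almost every vertex $w\in V$ sees a point of $\mathrm{supp}(A)$ within radius $r_0$ of itself, so any $x'\in\Pi'$ located at such a $w$ automatically has a neighbour in $A$ via either an out-edge (from some nearby $v$ with $R_v=r_0$) or an in-edge (if $R'_w$ is sufficiently large). The density of such $w$ tends to $1$ as $p(A)$ grows, and a short union bound using $\P(v\in\cB)\le e^{-cb_{r_0/2}}$ together with \cref{lemma:tail_bound_for_non-amenable_R} delivers the $\tfrac45$ threshold.

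The main obstacle is Step~3, the passage from raw geometric expansion ($\Pi'$-content of $C_i^{+r}$) to genuine $\cG$-neighbours. The subtlety is that the out-edge from $x\in A$ at $v$ to $x'\in\Pi'$ at $w$ requires $\mathrm{dist}(v,w)\le R_v$, not merely $\le r$: if $R_v$ is strictly smaller than $r$, some $\Pi'$-points in $C_i^{+r}$ might not be out-neighbours of any specific $x\in A\cap C_i$. Overcoming this requires combining out-edges from $A$ with in-edges into $A$ (edges whose existence is governed by $R'_{x'}$), and carefully using that a constant fraction of $w\in V$ has $R'_w\ge r$ by the tail estimate of \cref{lemma:tail_bound_for_non-amenable_R}. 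Balancing these two contributions while avoiding double-counting (so that the resulting expansion constant is $\ge 2$ and not just $>1$) is the delicate combinatorial heart of the proof.
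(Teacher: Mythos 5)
Your overall outline — combine the per-vertex expansion encoded in $R_v$ with a cluster decomposition and a mass-transport step — is in the right spirit, but there is a structural gap that sinks the argument as written, and a couple of secondary ones.

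The central problem is in Step~2. You partition $\mathrm{supp}(A)$ into its $4r$-connected components $\{C_i\}$ and then apply mass transport over these components. For the mass-transport bookkeeping (and to make the enlarged sets $C_i^{+r}$ disjoint blocks you can sum over), the components must be \emph{finite}. But $\mathrm{supp}(A)$ itself has no reason to have only finite $4r$-connected components: \cref{lemma:almost_sure_no_infinite_connected_comp_for_R_large} only controls $\{v:R_v>r\}$, not $\mathrm{supp}(A)$. Once $p(A)$ is bounded away from $0$ (say, of order $\tfrac12$), at density that high $\mathrm{supp}(A)$ will a.s.\ have infinite $4r$-connected components, and the decomposition collapses. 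Notice also that your conclusion $p(N_\cG(A))\ge r\,p(A)$ would then be internally inconsistent: with $p(A)\approx\tfrac12$ and $r$ large it exceeds $1$. The paper avoids this entirely by splitting $A$ by its $R$-value. For $A_k=\{x\in A: 2^{k-1}r_0<R_x\le 2^k r_0\}$ the set $V(A_k)$ is contained in $\{v:R_v>2^{k-1}r_0\}$, so \cref{lemma:almost_sure_no_infinite_connected_comp_for_R_large} \emph{does} apply, and the dyadic sum $\sum_k p(A_k)\ge \tfrac1{10}p(A)$ converts the per-scale estimates into the stated bound. The case $R_x=r_0$ is handled separately by a density argument, because there the cluster picture is unavailable.

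Two more issues are worth flagging. First, your use of the witness $v_i$ with $R_{v_i}\le r$ misfires: the defining inequality $|\Pi'\cap U^{+R_{v_i}}|\ge R_{v_i}|\Pi\cap U|$ holds for $U\in\cC_{4R_{v_i}}(v_i)$, and a $4r$-connected component $C_i$ need not be $4R_{v_i}$-connected when $R_{v_i}<r$. The paper instead takes the \emph{minimizer} of $R_v$ over the (finite) component $U$ of $V(A_k)$; since every $u\in U$ then has $R_u\ge R_v$, each $\Pi'$-point in $U^{+R_v}$ really is a $\cG$-out-neighbor, and $U$ is automatically $4R_v$-connected. Second, you never invoke \cref{lemma:lyons_nazarov_style_chebyshev}, yet it is indispensable: in the $R_x=r_0$ case, the set $D=V(A_0)^{+r_0/2}$ avoids $\cB$, but without the $\rho$-based vertex-expansion estimate you have no way to blow its density up by a factor large enough to beat $2$ when $p(A)$ is small. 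Your proposed route to the $\tfrac45$ threshold (union bound with the $\cB$ and $R$ tail estimates) is also not how it arises; in the paper $\tfrac45$ falls out of the inequality $p(N_\cG(A_0))\ge 0.9\, p(V(A_0)^{+r_0/2})$ once $p(V(A_0)^{+r_0/2})\ge0.9$. Finally, the paper's proof uses only out-edges of $A\subset\Pi$; the in-edge/out-edge balancing you flag as the ``delicate combinatorial heart'' is not actually needed.
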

\noindent
The constants $2$ and $\frac45$ have no particular meaning, and by suitably modifying the constants in our definitions, the first constant could be made arbitrarily large and the second could be made arbitrarily close to 1.

A key step in the proof of Lemma~\ref{lemma:boosted_hall} is the following observation: When $A\subset V$ is a factor of i.i.d., the density of $N(A)$ (the neighbors of $A$ in $G$) must grow by at least a positive factor relative to the density of $A$, even when $A$ has infinite connected components. This is formalized in the next lemma. Recall that $G$ is connected, transitive, non-amenable, unimodular graph, and $\rho$ is its spectral radius.

\begin{lemma}
    \label{lemma:lyons_nazarov_style_chebyshev}
    Let $A\subset V$ be a factor of i.i.d., and denote $p=p(A)$ and $p^\prime = p(N(A))$. Then
    \[
    p^\prime \ge \frac{p}{\rho^2(1-p) + p}.
    \] 
\end{lemma}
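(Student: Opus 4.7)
The plan is to derive the inequality from a spectral correlation bound combined with a mass-transport argument. Set $C := V\setminus N(A)$ and $q := p(C) = 1-p'$; note that $C$ is again a factor of $\cI$. The desired inequality $p' \ge p/(\rho^2(1-p)+p)$ is equivalent, by elementary manipulation, to
\[
pq \le \rho^2(1-p)(1-q),
\]
and it is this that I set out to prove.

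First, I compute the quantity $I := \mathbb{E}\bigl[(\mathbf{1}_A(0)-p)\cdot\mathcal{P}(\mathbf{1}_C-q)(0)\bigr]$ exactly. By definition of $C$, no vertex in $C$ has a neighbor in $A$, so $(\mathcal{P}\mathbf{1}_A)\cdot \mathbf{1}_C\equiv 0$ pointwise, and hence $\mathbb{E}[\mathcal{P}\mathbf{1}_A(0)\cdot \mathbf{1}_C(0)] = 0$. Applying~\eqref{eq:mass_transport_principle} to the diagonally invariant function $F(u,v) := \mathbf{1}_{u\sim v}\,\mathbb{P}(u\in A,\,v\in C)$ then gives $\mathbb{E}[\mathbf{1}_A(0)\cdot \mathcal{P}\mathbf{1}_C(0)] = \mathbb{E}[\mathcal{P}\mathbf{1}_A(0)\cdot \mathbf{1}_C(0)] = 0$. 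Expanding $I$ and using the marginals $\mathbb{E}[\mathbf{1}_A(0)] = p$ and $\mathbb{E}[\mathcal{P}\mathbf{1}_C(0)] = q$ then yields $I = -pq$.

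Second, I bound $|I|$ by a spectral argument. I view $\mathbf{1}_A(0)-p$ and $\mathbf{1}_C(0)-q$ as mean-zero, bounded elements of $L^2(\Omega,\mathbb{P})$, where $\Omega$ is the probability space of the i.i.d.\ process $\cI$; their $L^2$-norms are $\sqrt{p(1-p)}$ and $\sqrt{q(1-q)}$ respectively. Writing $\hat{\mathcal{P}}$ for the operator on $L^2(\Omega)$ obtained by averaging over shifts of the origin to its $d$ neighbors, we have $I = \langle \mathbf{1}_A(0)-p,\, \hat{\mathcal{P}}(\mathbf{1}_C(0)-q)\rangle_{L^2(\Omega)}$. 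The key spectral input is that $\hat{\mathcal{P}}$, restricted to mean-zero functions, has operator norm at most $\rho$: this is a consequence of the Bernoulli shift action of the automorphism group of $G$ being weakly contained in its regular representation, which is classical for Cayley graphs and extends to transitive unimodular graphs. Combined with Cauchy--Schwarz, this yields
\[
|I| \le \rho\cdot\sqrt{p(1-p)\,q(1-q)}.
\]

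Combining the two estimates gives $pq \le \rho\sqrt{p(1-p)q(1-q)}$; squaring and simplifying then yields $pq \le \rho^2(1-p)(1-q)$, as desired. The main obstacle is the spectral bound on $\hat{\mathcal{P}}$; once this is granted, the proof reduces to the mass-transport identity for $I$ and a routine algebraic rearrangement.
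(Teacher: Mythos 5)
Your proof is correct, and it takes a genuinely different route from the paper. The paper works directly with $N(A)$: starting from $p=\E\big[\tfrac1d\sum_{u\sim v}\mathbf 1_{\{u\in A,\,v\in N(A)\}}\big]$, it applies Cauchy--Schwarz to obtain $p^2\le p'\cdot\E\big[(\tfrac1d\sum_{u\sim v}\mathbf 1_{\{u\in A\}})^2\big]$ and then invokes the variance contraction $\var(\tfrac1d\sum_{u\sim v}\mathbf 1_{\{u\in A\}})\le\rho^2\,p(1-p)$. You instead pass to the complement $C=V\setminus N(A)$, observe that the pointwise identity $(\mathcal P\mathbf 1_A)\cdot\mathbf 1_C\equiv 0$ together with the mass-transport principle forces the centred correlation $I=\E[(\mathbf 1_A(0)-p)\,\mathcal P(\mathbf 1_C-q)(0)]$ to equal exactly $-pq$, and then bound $|I|$ by $\rho\sqrt{p(1-p)q(1-q)}$. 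The two arguments rest on the same spectral input -- the operator norm of the Markov averaging operator on mean-zero elements of $L^2(\Omega)$ is at most $\rho$, which is exactly the variance contraction the paper uses and which both proofs state without derivation -- but you package it as a correlation (expander-mixing-lemma style) bound rather than a variance bound, and you obtain the final inequality in the symmetric form $pq\le\rho^2(1-p)(1-q)$ rather than by direct algebraic rearrangement. Your use of MTP to symmetrize $\E[\mathbf 1_A(0)\,\mathcal P\mathbf 1_C(0)]=\E[\mathcal P\mathbf 1_A(0)\,\mathbf 1_C(0)]$ is a clean step; it is worth noting that this is precisely the self-adjointness of $\hat{\mathcal P}$ on $L^2(\Omega)$, so an alternative phrasing would be to cite that self-adjointness once and dispense with the explicit MTP application. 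The advantages are roughly: the paper's version is shorter and more elementary (no need to introduce $C$ or $q$), while yours makes the structural role of $\rho$ as a correlation bound more transparent and yields a symmetric inequality that is sometimes easier to iterate.
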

\noindent Lemma~\ref{lemma:lyons_nazarov_style_chebyshev} originally appeared in~\cite[Lemma~2.3]{lyons2011perfect}, in the case where $G$ is a non-amenable Cayley graph. Their proof works just as well for transitive unimodular graphs, and for completeness we provide it below.
\begin{proof}[Proof of Lemma~\ref{lemma:lyons_nazarov_style_chebyshev}]
    We first note that
    \begin{equation*}
        p = \E\big[\mathbf{1}_{\{v\in A\}}\big] = \E\Big[\frac{1}{d} \sum_{u\sim v} \mathbf{1}_{\{u\in A\}}\Big] = \E\Big[\frac{1}{d} \sum_{u\sim v} \mathbf{1}_{\{u\in A , v \in N(A) \}} \Big] \, . 
    \end{equation*}
    Therefore, by the Cauchy-Schwarz inequality, 
    \begin{equation*}
        p^2 \le \E\Big[ \mathbf{1}_{\{v\in N(A)\}}\Big] \cdot \E\Big[\Big(\frac{1}{d} \sum_{u\sim v} \mathbf{1}_{\{u\in A\}}\Big)^2\Big] = p^\prime \cdot \bigg(\text{Var}\Big(\frac{1}{d} \sum_{u\sim v} \mathbf{1}_{\{u\in A\}}\Big) + p^2\bigg) \, .
    \end{equation*}
    By the definition of the spectral radius we have
    \[
    \text{Var}\Big(\frac{1}{d} \sum_{u\sim v} \mathbf{1}_{\{u\in A\}}\Big) \le \rho^2 \text{Var}\big( \mathbf{1}_{\{v\in A\}}\big) = \rho^2 p (1-p) \, ,
    \]
    and so we get that
    \[
    p \le p^\prime\big(\rho^2(1-p) + p\big),
    \]
    which is what we wanted to show. 
\end{proof}

\begin{proof}[Proof of Lemma~\ref{lemma:boosted_hall}]
Suppose without loss of generality that $A\subset \Pi$.
Denote
\[ A_0 := \{ x \in A : R_x=r_0 \} .\]
The proof splits into two cases, according to whether $A$ is almost entirely made up of $A_0$ or not. 
Assume first that $p(A_0) \ge 0.9 \cdot p(A)$. 
By the definition of $\cG$, we have
\[
p(N_{\cG}(A)) \ge p(N_\cG(A_0)) = p(\Pi'\cap V(A_0)^{+r_0}) = p(\Pi'\cap D^{+r_0/2}),
\]
where $V(A_0)$ is the set of vertices that are represented in $A_0$ (i.e., $V(A_0)$ is $A_0$ without multiplicities), and we denote $D:=V(A_0)^{+r_0/2}$. By the definition of $R_v$, we know that $D\cap \cB = \emptyset$. By the definition of $\cB$ (see~\eqref{eq:def_of_bad_points}), this implies that for each $v\in D$ we have $|\Pi'\cap B_{r_0/2}(v)| \ge 0.9 b_{r_0/2}$. By the mass transport principle we see that
\begin{align*}
    0.9 b_{r_0/2} \cdot p(D) &= 0.9 b_{r_0/2} \cdot  \E[\mathbf{1}_{\{v\in D\}}] \\ & \le \E\bigg[ \sum_{u\in V} \mathbf{1}_{\{v\in D\}} \mathbf{1}_{\{\text{dist}(u,v) \le r_0/2\}} \big|\Pi'\cap \{u \} \big|\bigg] \\  \eqref{eq:mass_transport_principle} &= \E\bigg[ \sum_{u\in V} \mathbf{1}_{\{u\in D\}} \mathbf{1}_{\{\text{dist}(u,v) \le r_0/2\}} \big|\Pi'\cap \{v \} \big|\bigg] \le b_{r_0/2} \cdot p(\Pi'\cap D^{+r_0/2}) \, ,
\end{align*}
where in the last inequality we used the fact that each point from $\Pi'\cap D^{+r_0/2}$ is counted at most $b_{r_0/2}$ times in the sum. Altogether, we arrive at the inequality
\begin{equation}
    \label{eq:lower_bound_of_p_N_G_A_first_case}
    p(N_{\cG}(A)) \ge 0.9 \cdot p(D)  = 0.9 \cdot p(V(A_0)^{+r_0/2})\, .
\end{equation}
By the definition of $R_v$, we have that $|\Pi \cap \{v\}| \le r_0$ for all $v \in A_0$.
Thus, $p(V(A_0)) \ge \frac1{r_0}p(A_0)$.
Now, if $p(V(A_0)^{+r_0/2}) \ge 0.9$,  then~\eqref{eq:lower_bound_of_p_N_G_A_first_case} implies that 
\[
p(N_{\mathcal{G}}(A)) \ge 0.9 \cdot p(V(A_0)^{+r_0/2}) \ge (0.9)^2  \ge \tfrac45 \, . 
\]
On the other hand, if $p(V(A_0)^{+r_0/2}) <0.9$, we can repeatedly apply Lemma~\ref{lemma:lyons_nazarov_style_chebyshev} (as $(\Pi,\Pi')$ is a factor of the i.i.d.\ process $\cI$) and obtain using~\eqref{eq:lower_bound_of_p_N_G_A_first_case} that
\begin{align*}
    p(N_\cG(A))
    &\ge 0.9 \cdot p(V(A_0)^{+r_0/2}) \\
    &\ge 0.9 \cdot(0.1\rho^2 + 0.9)^{-r_0/2} \cdot p(V(A_0)) \\
    &\ge 0.9 \cdot(0.1\rho^2 + 0.9)^{-r_0/2} \cdot \tfrac1{r_0} \cdot p(A_0) \\
    &\ge (0.9)^2 \cdot (0.1\rho^2 + 0.9)^{-r_0/2} \cdot \tfrac1{r_0} \cdot p(A) \ge 2 \cdot p(A)\, ,
\end{align*}
where the last inequality holds for $r_0$ sufficiently large.

Assume now that $p(A_0) < 0.9 \cdot p(A)$.
For $k \ge 1$, denote
\[
A_k := \{ x \in A : 2^{k-1} r_0 < R_x \le 2^k r_0\} \, ,
\]
and observe that
\[ p(A) \le 10 \sum_{k=1}^\infty p(A_k) \le 10 \cdot \sup_{k \ge 1} \big[2^k \, p(A_k)\big]\sum_{k=1}^\infty 2^{-k} = 10 \cdot \sup_{k \ge 1} \big[2^k \, p(A_k)\big] \, . \]
Therefore, it suffices to show that for every $k \ge 1$,
\begin{equation}
    \label{eq:lower_bound_of_p_N_G_A_second_case}
 20 \cdot 2^k \, p(A_k) \le p(N_{\cG}(A)) .
\end{equation}
Since $V(A_k) \subset \{ v \in V : R_v > 2^{k-1}r_0 \}$, \cref{lemma:almost_sure_no_infinite_connected_comp_for_R_large} implies that $V(A_k)$ consists only of finite $2^{k+1} r_0$-connected components, almost surely.
Thus, if $U$ is one such component, then choosing any $v \in U$, noting that $r := R_v \in [2^{k-1}r_0,2^kr_0]$ so that $U$ is $4r$-connected, and using the definition~\eqref{eq:def_of_R_v_in_nonamenable_case} of $R_v$, we see that $|\Pi \cap U| \le \tfrac1r |\Pi' \cap U^{+r}|$. By choosing $v \in U$ which minimizes $R_v$, $v \in U$, we get that $\Pi' \cap U^{+r} \subset N_\cG(U) \subset N_\cG(A_k)$.
Therefore, using that $\{U^{+2^kr_0}\}$ are pairwise disjoint as $U$ ranges over the $2^{k+1} r_0$-connected components of $V(A_k)$, an application of~\eqref{eq:mass_transport_principle} (where each point in $A_k$ sends out a unit mass to each of its neighbors in $N_{\cG}(A)$), yields that
\[
p\big(A_k\big) \le \frac1{2^{k-1}r_0} \cdot p\big(N_{\cG}(A_k)\big) \le \tfrac1{20} 2^{-k} \cdot p\big(N_{\cG}(A)\big) .
\]
This completes the proof of the lemma.
\end{proof}
\noindent
We end this section with a simple claim about independent sets in $\cG$.
\begin{claim}\label{cl:indep-set-density}
    Let $A\subset \Pi \sqcup \Pi'$ be a factor of $\mathcal{I}$ such that $A$ is almost surely an independent set in $\cG$. Then
    \[
    \min\{ p(A \cap \Pi),\, p(A \cap \Pi') \} \le \tfrac13 .
    \]
\end{claim}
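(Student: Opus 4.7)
The plan is to derive a contradiction from the assumption that both densities exceed $\tfrac13$, by confronting the expansion guarantee of \cref{lemma:boosted_hall} with the upper bound forced on the neighborhood of $A\cap \Pi$ by the independence of $A$ in $\cG$.

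First I would set $\alpha := p(A\cap \Pi)$ and $\beta := p(A\cap \Pi')$, and note that both $A\cap \Pi$ and $A\cap \Pi'$ are factors of $\cI$ (since $A$, $\Pi$, and $\Pi'$ all are), so \cref{lemma:boosted_hall} is applicable to either of them. Because $A$ is almost surely independent in $\cG$, no neighbor of a vertex of $A\cap \Pi$ lies in $A\cap \Pi'$; equivalently, $N_{\cG}(A\cap \Pi) \subset \Pi' \setminus (A\cap \Pi')$. Using $p(\Pi')=1$, this yields the deterministic upper bound
\[
p\bigl(N_{\cG}(A\cap \Pi)\bigr) \le 1 - \beta .
\]

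Now suppose, for contradiction, that $\min\{\alpha,\beta\} > \tfrac13$. Applying \cref{lemma:boosted_hall} to $A\cap \Pi$ gives
\[
p\bigl(N_{\cG}(A\cap \Pi)\bigr) \ge \min\bigl\{2\alpha,\, \tfrac45\bigr\} > \tfrac{2}{3},
\]
since $2\alpha > \tfrac23$ and $\tfrac45 > \tfrac23$. Combined with the upper bound, this gives $1-\beta > \tfrac23$, i.e., $\beta < \tfrac13$, contradicting the assumption that $\beta > \tfrac13$.

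I do not expect any serious obstacle here: the whole argument is a one-line squeeze between the lower bound of \cref{lemma:boosted_hall} and the trivial upper bound $p(N_{\cG}(A\cap \Pi)) \le p(\Pi' \setminus (A\cap \Pi')) = 1-\beta$. The only thing worth double-checking is that $A\cap \Pi$ really is a factor of $\cI$ (so that \cref{lemma:boosted_hall} applies), which follows immediately from the hypothesis that $A$ is a factor of $\cI$ together with $\Pi$ being a factor of $\cI$.
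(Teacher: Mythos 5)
Your proposal is correct and follows essentially the same route as the paper's proof: both use the independence of $A$ to get the upper bound $p(N_\cG(A\cap\Pi)) + p(A\cap\Pi') \le 1$, then apply \cref{lemma:boosted_hall} to $A\cap\Pi$ and squeeze; the only cosmetic difference is that the paper organizes it as a direct case split while you phrase it as a contradiction.
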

\begin{proof}
    Since $N_\cG(A \cap \Pi) \cap (A \cap \Pi') = \emptyset$, we have that
    \[ p(N_\cG(A \cap \Pi)) + p(A \cap \Pi') \le 1 .\]
    If $p(A \cap \Pi) \le \tfrac13$, we are done. Otherwise, by \cref{lemma:boosted_hall},
    \[ p(A \cap \Pi') \le 1 - p(N_\cG(A \cap \Pi)) \le 1 - \min\{2p(A \cap \Pi),\tfrac45\} \le \tfrac13 . \qedhere \]
\end{proof}

\section{The Lyons-Nazarov matching algorithm}
\label{sec:lyons-nazarov_matching_algo}

In this section we describe the matching algorithm which yields the proof of Theorem~\ref{thm:factor_matching_for_transitive_non-amenable_graphs}. As we already mentioned in the introduction, the idea is inspired by a similar matching algorithm from~\cite{lyons2011perfect}, though the details of the proof are somewhat different. 
Recall the definition of the bipartite directed graph $\cG$ on $\Pi \sqcup \Pi'$ from Section~\ref{subsec:bipartite_graph}. In this section, we forget about the direction of the edges, and only consider $\cG$ as an undirected graph, in which case, $x\in \Pi$ and $x'\in \Pi'$ share an edge whenever $\text{dist}(x,x') \le \max\{R_x,R'_{x'}\}$. Our goal is to construct a factor matching scheme $M:\Pi\to \Pi'$ so that $\text{dist}(x,M(x)) \le \max\{R_x,R'_{M(x)}\}$ almost surely.

A \textbf{partial matching} in $\cG$ is a collection of disjoint edges of $\cG$. Suppose we are given a partial matching. A path in $\cG$ is said to be \textbf{alternating} if its edges alternate between belonging to the matching and not. A \textbf{chain} is a simple alternating path that starts and ends at unmatched vertices. By the \textbf{flipping} of a chain $(x_1,y_1,\dots,x_n,y_n)$, we mean the removal of the existing matching edges $\{ \{y_i,x_{i+1}\} : 1 \le i \le n-1 \}$ and insertion of new matching edges $\{ \{x_i,y_i\} : 1 \le i \le n \}$, thereby producing a new partial matching, in which all $x_1,\dots,x_n$ and $y_1,\dots,y_n$ are now matched. See Figure~\ref{figure:flipping} for a visual demonstration.  The \textbf{length} of a chain is the number of edges it contains (e.g., $2n-1$ for the chain just mentioned). A chain necessary has odd length.

   	\begin{figure}
		 	\begin{center}	\scalebox{0.3}{\includegraphics{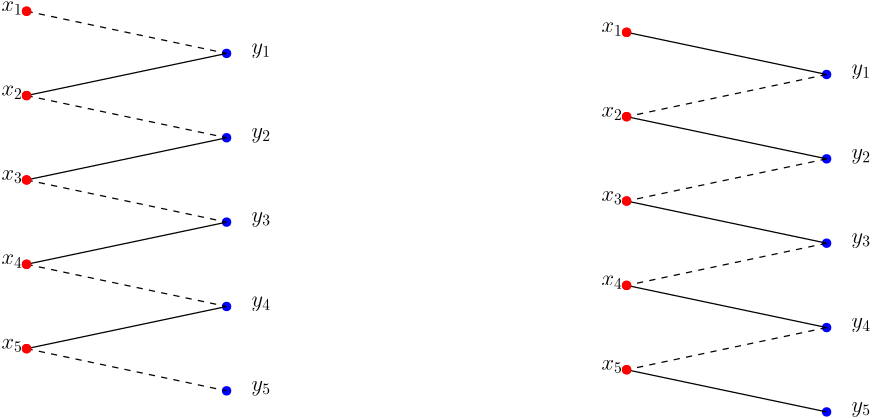}}
		 	\end{center}
            \caption{Left: A chain of length 9, beginning at the red vertex $x_1$ and ending at the blue vertex $y_5$. Dashed edges correspond to edges not contained in the partial matching. Right: The corresponding chain after flipping. Solid edges correspond to edges contained in the (new) partial matching.}
            \label{figure:flipping}
		\end{figure}

\subsection{Proof of main result}

In this section we provide the proof of Theorem~\ref{thm:factor_matching_for_transitive_non-amenable_graphs}. Indeed, we will construct the desired matching in a sequence of infinitely many stages, where at each stage we only have a partial matching. The set of matched vertices will only increase at each stage (but the set of matched edges will not increase), and each fixed vertex will be matched at some finite stage. We shall do this in such a manner that each edge changes its state only finitely often almost surely. This will give the desired factor matching by taking the pointwise limit which exists almost surely.

\begin{proof}[Proof of Theorem~\ref{thm:factor_matching_for_transitive_non-amenable_graphs}]

    We start with the empty partial matching. At the end of the $n$-th stage there will be no chains of length less than $4n$. Stage $n$ proceeds as follows: Consider all chains of length less than $4n$ (in this paragraph, a chain always refers to these). We place a total order on this collection, by taking the lexicographical order induced by a total order on $V$, which in turn exists by the assumption~\eqref{eq:assumption_total_order}.
    Now consider the subset of chains which are the smallest among all chains intersecting it, and flip all of those chains simultaneously. After that, reevaluate the set of chains of length less than $4n$ and repeat the previous step. After countably many repetitions of this step, there are no more chains of length less than $4n$, and this is the end of stage $n$.

    It remains to show that each edge of $\cG$ changes its state only finitely many times almost surely. Define
    \begin{align*}
     p_n &:= p(\{ x \in \Pi : x\text{ is unmatched at the end of stage }n\}) \\
         &\phantom{:}= p(\{ x' \in \Pi' : x'\text{ is unmatched at the end of stage }n\}) ,
    \end{align*}
    with the equality following from a simple application of~\eqref{eq:mass_transport_principle}.
    It suffices to show that $p_n$ decays exponentially in $n$.
    Indeed, given this, a mass transport argument then gives that the expected number of times that an edge changes its state is finite (if each endpoint of a flipped chain sends mass 1 to each vertex along the chain, then the expected mass out of a given vertex is at most $\sum_n 4n p_n < \infty$, and the mass in is the number of times that an incident edge flips its state).

    Let us show that $p_n$ decays exponentially.
    Fix $n$ and consider the partial matching at the end of stage $n$.
    Let $A_0$ denote the set of unmatched vertices (belonging to either $\Pi$ or $\Pi'$), and define $B_0 := N_\cG(A_0)$.
    For $k \ge 1$, let $A_k$ be all vertices that are matched with someone from $B_{k-1}$, and define $B_k := N_\cG(A_k)$.
    Observe that $A_k$ (resp.\ $B_k$) is the set of vertices $x$ for which there exists an alternating path of even (resp.\ odd) length at most $2k+1$ from an unmatched vertex to $x$.
    Since there are no chains of length less than $4n$, every vertex in $B_0 \cup \cdots \cup B_{n-1}$ is matched, and each of $A_0,A_1,\dots,A_{n-1}$ is an independent set in $\cG$.
    It follows from the former that $A_1 \subset A_2 \subset \cdots \subset A_{n-1}$, and together with~\eqref{eq:mass_transport_principle} that $p(A_k)=p(B_{k-1})$, $p(A_k \cap \Pi)=p(B_{k-1} \cap \Pi')$ and $p(A_k \cap \Pi')=p(B_{k-1} \cap \Pi)$ for $1 \le k \le n$.
    Since $A_{n-1}$ is an independent set in $\cG$, \cref{cl:indep-set-density} gives that
    \[ \min\big\{p(A_{n-1} \cap \Pi),\, p(A_{n-1} \cap \Pi')\big\} \le \tfrac13 .\]
    Suppose without loss of generality that $p(A_{n-1} \cap \Pi) \le \frac13$.
    Since $A_1 \subset A_2 \subset \cdots \subset A_{n-1}$, we have that $p(A_k \cap \Pi) \le \frac13$ for all $0 \le k \le n-1$ (for $k=0$ this follows from \cref{cl:indep-set-density} since $A_0$ is an independent set in $\cG$ and $p(A_0 \cap \Pi)=p(A_0 \cap \Pi')$).
    Thus, by \cref{lemma:boosted_hall},
    \[ p(A_{k+1} \cap \Pi) = p(B_k \cap \Pi') \ge 2 \cdot p(A_k \cap \Pi) \qquad\text{for all }0 \le k \le n-1 .\]
    Hence, $1 \ge p(A_n \cap \Pi) \ge 2^n p(A_0 \cap \Pi)$. Thus, $p_n=p(A_0 \cap \Pi) \le 2^{-n}$.

    \noindent
    This completes the proof that our construction stabilizes and yields a matching $M$ between $\Pi$ and $\Pi'$.
    This matching is a factor of $(\Pi,\Pi')$.
    Since $p_n \to 0$, all vertices are matched in $M$, meaning that $M$ is a perfect matching.
    Finally, for $u,v \in V$, the event that some $x \in \Pi \cap \{v\}$ is matched to some $y \in \Pi' \cap \{u\}$ is contained in the event that $\max\{R_v,R'_u\} \ge \dist(u,v)$, so that
    \begin{align*}
     \E |\{ x \in \Pi \cap \{v\} : \dist(M(v),v) \ge r\}|
      &\le \P\big(\dist(M(x),x) \ge r\text{ for some }x \in \Pi \cap \{v\}\big) \\
      &\le \sum_{u:\dist(u,v) \ge r} \P(\max\{R_v,R'_u\} \ge \dist(u,v)) \\
      &\le \sum_{k=r}^\infty b_k \cdot 2\P(R_v \ge k) \le e^{-c b_r} ,
    \end{align*}
    where we used \cref{lemma:tail_bound_for_non-amenable_R} in the last inequality.
\end{proof}

\section{Proofs of supporting lemmas}
\label{sec:proof_of_lemmas}

In this section we prove \cref{lemma:tail_bound_for_non-amenable_R} and \cref{lemma:almost_sure_no_infinite_connected_comp_for_R_large}.
For the reader's convenience, we recall some definitions.
Recall that the set $\cC_r(v)$ consists of all finite $r$-connected sets containing $v$, and
\begin{align*}
    R_v := r_0 \quad  &\text{whenever \  $\cB \cap B_{r_0/2}(v) = \emptyset$ \  and \  $|\Pi \cap \{v\}| \le r_0$};  \quad \text{and otherwise} \\
R_v &:= \min\Big\{ r>r_0 \, : \, |\Pi'\cap U^{+r}| \ge r |\Pi \cap U| \ \text{for every } U\in \cC_{4r}(v) \Big\} \, ,
\end{align*} 
where $\cB := \{ v\in V \, : \, |\Pi' \cap B_{r_0/2}(v)| \le 0.9 \,  b_{r_0/2} \}$.

\subsection{Tail bounds for the matching distance}

We first state a large deviations type bound for the discrepancy in the point count of $\Pi$ and $\Pi^\prime$.

\begin{claim}
    \label{claim:prob_bound_for_bad_point_count_in_U}
    For any finite set $U\subset V$ we have 
    \begin{equation*}
        \P\big(|\Pi'\cap U^{+r}| < r |\Pi \cap U| \big) \le e^{-c|U^{+r}|}\, .
    \end{equation*}
\end{claim}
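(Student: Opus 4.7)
The plan is to reduce the claim to standard concentration inequalities applied to sums of independent random variables, exploiting the non-amenability of $G$ so that the ratio $|U^{+r}|/|U|$ is exponentially large in $r$. The key observation is that the event $\{|\Pi'\cap U^{+r}| < r|\Pi \cap U|\}$ essentially forces either a big downward deviation of $|\Pi' \cap U^{+r}|$ from its mean $|U^{+r}|$, or a big upward deviation of $|\Pi \cap U|$ from its mean $|U|$; both are exponentially unlikely at the scale of $|U^{+r}|$.

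First I would verify that $\E|\Pi \cap U| = |U|$ and $\E|\Pi' \cap U^{+r}| = |U^{+r}|$ in both the Poisson and perturbed cases. The Poisson case is direct; the perturbed case follows by applying~\eqref{eq:mass_transport_principle} to the diagonally invariant function $f(v,w) = \P(X_v = w)$. Next, iterating Cheeger's inequality $|\partial A| \ge h|A|$ gives $|U^{+r}| \ge (1+h)^r |U|$, so choosing $r_0$ large enough (depending only on $h$) ensures $|U^{+r}| \ge 4r|U|$ for all $r \ge r_0$. Consequently,
\begin{equation*}
\{|\Pi' \cap U^{+r}| < r|\Pi\cap U|\} \subset E_1 \cup E_2,
\end{equation*}
where $E_1 := \{|\Pi'\cap U^{+r}| \le \tfrac12 |U^{+r}|\}$ and $E_2 := \{|\Pi\cap U| \ge \tfrac{1}{2r}|U^{+r}|\}$.

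In both the Poisson and perturbed cases, $|\Pi' \cap U^{+r}|$ and $|\Pi \cap U|$ are sums of independent random variables: independent $\mathrm{Poisson}(1)$'s in the Poisson case, or independent Bernoullis $\1[X_v \in \,\cdot\,]$ in the perturbed case. The standard multiplicative Chernoff bound immediately yields $\P(E_1) \le e^{-c|U^{+r}|}$. For $E_2$, the required deviation factor satisfies $k/|U| = |U^{+r}|/(2r|U|) \ge (1+h)^r/(2r)$, which tends to infinity with $r$; using the Chernoff-type upper tail $\P(\mathrm{Poi}(\mu) \ge k) \le (e\mu/k)^k$ (and its Bernoulli-sum analogue obtained from $\E[e^{t\sum \1[X_v \in U]}] \le e^{(e^t-1)|U|}$), one gets
\begin{equation*}
\P(E_2) \le \Big(\tfrac{2er|U|}{|U^{+r}|}\Big)^{|U^{+r}|/(2r)} \le \exp\Big(-\tfrac{|U^{+r}|}{2r}\cdot\big(r\log(1+h) - \log(2er)\big)\Big) \le e^{-c|U^{+r}|}
\end{equation*}
provided $r_0$ is sufficiently large. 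Summing the two bounds finishes the proof.

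The main obstacle is the estimate for $E_2$: the Chernoff exponent is only of order $k = |U^{+r}|/(2r)$, short of the desired $|U^{+r}|$ by a factor of $r$. It is precisely the non-amenable expansion $|U^{+r}|/|U| \ge (1+h)^r$ that turns the logarithmic factor inside Chernoff into an extra $\Theta(r)$, exactly compensating for the $1/r$ loss and producing the advertised rate $e^{-c|U^{+r}|}$.
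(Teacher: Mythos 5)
Your proof is correct and follows essentially the same route as the paper: split the event into an undercrowding deviation for $|\Pi' \cap U^{+r}|$ and an overcrowding deviation for $|\Pi \cap U|$, control each by a Chernoff-type bound for sums of independent Poisson or Bernoulli variables, and invoke the Cheeger expansion $|U^{+r}| \ge (1+h)^r|U|$ so that the logarithmic gain inside the Chernoff exponent cancels the $1/r$ loss in the overcrowding tail. The paper uses constants $0.9$ in place of your $\tfrac12$ and packages the overcrowding bound via $\P(|\Pi \cap W| \ge n) \le \exp(-\tfrac12 n \log\tfrac{n}{2|W|})$ rather than the $(e\mu/k)^k$ form, but these are cosmetic differences.
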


We now provide the proof \cref{lemma:tail_bound_for_non-amenable_R} based on the above claim. 

\begin{proof}[Proof of \cref{lemma:tail_bound_for_non-amenable_R}]
    We need to show that $\P(R_v > r) \le \exp(-cb_r)$ for all $r>r_0$. Claim~\ref{claim:prob_bound_for_bad_point_count_in_U} together with a union bound, yields that
    \[ \P\big(R_v > r\big) = \P\big(|\Pi'\cap U^{+r}| < r |\Pi \cap U| \ \text{for some } U\in \cC_{4r}(v)\big) \le \sum_{U \in \cC_{4r}(v)} e^{-c|U^{+r}|} .\]
    The bound now follows by splitting the right-hand side according to the size of $|U|$. Indeed, as $\cC_{4r}(v)$ consists of $4r$-connected sets, we have 
     \begin{equation}
         \label{eq:exponential_bound_on_connected_sets_of_given_size}
         \# \{ U\in \cC_{4r}(v) \, : \, |U| = k \} \le e^{Crk},
     \end{equation} 
     for some constant $C>0$ which depends only on the maximum degree in $G$; 
     see, e.g.,~\cite[Chapter 45]{bollobas2006art}. Furthermore, by the definition of the Cheeger constant $h=h(G)>0$ (see~\eqref{eq:cheeger_constant}), we have that
    \[
    |U^{+r}| \ge (1+h)^r |U| .
    \]
    Since we also have that $|U^{+r}| \ge b_r$, we obtain that
    \begin{align*}
     \P(R_v > r) &\le \sum_{k\ge 1} e^{Crk} e^{-c\max\{b_r , (1+h)^r k \}} \\ &\le \sum_{k=1}^{\lfloor b_r/(1+h)^r\rfloor } e^{Crk}  e^{-cb_r} + \sum_{k\ge \lfloor b_r/(1+h)^r\rfloor + 1}e^{-k\big(c(1+h)^r-Cr
     \big)} \le e^{-cb_r} \, ,
    \end{align*}
    where the last inequality holds for $r_0$ sufficiently large and we are done.
\end{proof}

\begin{proof}[Proof of Claim~\ref{claim:prob_bound_for_bad_point_count_in_U}]
    
    By separating into ``undercrowding'' and ``overcrowding'' events, it suffices to show that
    \begin{equation}\label{eq:prob-bound-too-few-points}
    \P(|\Pi'\cap U^{+r}| \le 0.9 |U^{+r}|) \le e^{-c|U^{+r}|} .
    \end{equation}
    and
    \begin{equation}\label{eq:prob-bound-too-many-points}
    \P(|\Pi \cap U| \ge \tfrac{0.9}{r} |U^{+r}|) \le e^{-c|U^{+r}|} .
    \end{equation}
    We begin with the undercrowding event~\eqref{eq:prob-bound-too-few-points}.
    For this, we show more generally that for any finite set $W\subset V$ and any $\eps>0$ we have
    \begin{equation}
        \label{eq:concentration_undercrowding}
        \P\Big(|\Pi'\cap W| \le (1-\eps) |W|\Big) \le \exp \big(-\tfrac12 \eps^2 |W|\big) \, .
    \end{equation}
    We first prove~\eqref{eq:concentration_undercrowding} in the case where $\Pi'$ is a perturbed vertex set. Indeed, by transitivity we have $\E|\Pi'\cap W| = |W|$. Furthermore, $|\Pi'\cap W| = \sum_{v\in V} \mathbf{1}_{\{\Pi'_v \in W\}}$ is a sum of independent Bernoulli random variables. Hence, the estimate~\eqref{eq:concentration_undercrowding} follows from the standard bound on the moment generating function of $|\Pi'\cap W|$, see e.g.~\cite[Theorem~A.1.13]{alon2016probabilistic}. In the case when $\Pi'$ is a Poisson process, this follows in the same way by approximating a Poisson random variable by sums of independent Bernoulli random variables.

    We now turn to overcrowding event~\eqref{eq:prob-bound-too-many-points}.
    For this, we show more generally that for any finite set $W\subset V$ and any $n \ge 0$ we have
    \begin{equation*}
        \P\Big(|\Pi\cap W| \ge |W| + n\Big) \le \exp\left(n-(|W|+n) \log(1+\tfrac{n}{|W|})\right) \, .
    \end{equation*}
    When $\Pi$ is a perturbed vertex set, this again follows from a standard bound, see e.g.~\cite[Corollary~A.1.10]{alon2016probabilistic}. When $\Pi$ is a Poisson process, this again follows by approximation as before.
    Using that $\log(1+x)-1 \ge \frac12 \log x$ for $x \ge 10$, the above implies that
    \begin{equation}
        \label{eq:concentration_overcrowding}
        \P\Big(|\Pi\cap W| \ge n\Big) \le \exp\left(-\tfrac12 n\log \tfrac{n}{2|W|}\right) \qquad\text{for }n \ge 10|W| .
    \end{equation}
    Finally, this implies~\eqref{eq:prob-bound-too-many-points} by taking $W=U$ and $n=\frac{0.9}{r}|U^{+r}| \ge 2(1+h)^{cr} |U|$. 
\end{proof}

\subsection{Absence of infinite clusters}

We now turn to \cref{lemma:almost_sure_no_infinite_connected_comp_for_R_large},for which we will need the following technical combinatorial claim. 
\begin{claim}
    \label{claim:maximal_shortest_path}
    Let $r\ge 1$. Let $\{U_i\}_{i\in I}$ be a family of $r$-connected sets in $G$ such that $U=\bigcup_{i\in I} U_i$ is also $r$-connected. Then for all $u,v\in U$ there exists ${j_1},\ldots,{j_n}\in I$ such that:
    \begin{enumerate}
        \item[{\normalfont (1)}] For all $k\not=k^\prime$ we have $\text{\normalfont dist}(U_{j_k},U_{j_{k^\prime}}) > r$;
        \item[{\normalfont (2)}] For all $1\le k < n$ we have $\text{\normalfont dist}(U_{j_k},U_{j_{k+1}}) \le  |U_{j_{k}}| + |U_{j_{k+1}}| + 3r $;
        \item[{\normalfont (3)}] We have $\text{\normalfont dist}(u,U_{j_{1}}) \le |U_{j_1}| + r$ and $\text{\normalfont dist}(v,U_{j_{n}}) \le |U_{j_n}| + r$. 
    \end{enumerate}
    In particular, we have $\text{\normalfont dist}(u,v) \le 3rn + 3\sum_{k=1}^{n} |U_{j_k}|$. 
    \end{claim}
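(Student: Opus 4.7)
The plan is to reduce the problem to finding a shortest path in an auxiliary graph on the index set $I$. Define $\Gamma$ to be the graph on $I$ whose edges are the pairs $\{i,i'\}$ with $\dist(U_i,U_{i'}) \le r$. I claim that a shortest path in $\Gamma$ between some index $i_0$ with $u \in U_{i_0}$ and some index $i_m$ with $v \in U_{i_m}$ yields the desired sequence $j_1 = i_0, j_2, \ldots, j_n = i_m$.

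First I would verify that $\Gamma$ is connected, so that such a shortest path exists. Given $i, i' \in I$, pick $w \in U_i$ and $w' \in U_{i'}$. Since $U = \bigcup_{i \in I} U_i$ is $r$-connected, there is an $r$-path $w = x_0, x_1, \ldots, x_m = w'$ with each $x_k \in U$. Assigning to each $x_k$ some index $i(k) \in I$ with $x_k \in U_{i(k)}$ produces a walk $i(0), i(1), \ldots, i(m)$ in $\Gamma$ from $i$ to $i'$, since $\dist(U_{i(k)}, U_{i(k+1)}) \le \dist(x_k, x_{k+1}) \le r$. Hence $\Gamma$ is connected, and a shortest path between the chosen endpoints exists.

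The three required properties are then immediate. Property~(2) holds with room to spare: consecutive $j_k, j_{k+1}$ are $\Gamma$-adjacent, so $\dist(U_{j_k}, U_{j_{k+1}}) \le r \le |U_{j_k}| + |U_{j_{k+1}}| + 3r$. Property~(1) follows from the standard fact that a shortest path in any graph is induced: if $\dist(U_{j_s}, U_{j_t}) \le r$ for some $|s - t| \ge 2$, then $j_s, j_t$ would be $\Gamma$-adjacent, allowing one to shortcut the path and contradicting its minimality. Property~(3) is trivial since $u \in U_{j_1}$ and $v \in U_{j_n}$, so both distances are zero.

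The ``in particular'' bound is then a bookkeeping exercise via the triangle inequality: one stitches together an explicit path in $G$ from $u$ to $v$ that enters each $U_{j_k}$ in succession, using~(3) at the endpoints and~(2) for the transitions $U_{j_k} \to U_{j_{k+1}}$, while traversing each $U_{j_k}$ between its entry and exit points using the $r$-connectedness of $U_{j_k}$ (giving a contribution of at most $r(|U_{j_k}|-1)$ per set). I do not foresee a significant obstacle; the key conceptual step is to recognize that the natural object on which to take a shortest path is the family $\{U_i\}_{i \in I}$ itself (via $\Gamma$), rather than the underlying graph $G$.
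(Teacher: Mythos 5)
Your reduction to the auxiliary graph $\Gamma$ on $I$ is the right starting point (and it is also how the paper begins), but taking the \emph{entire} shortest path in $\Gamma$ cannot work, and the gap is in your treatment of property~(1). You only check non-adjacent pairs $|s-t|\ge 2$; for consecutive indices $j_k, j_{k+1}$ on your path you have $\dist(U_{j_k},U_{j_{k+1}})\le r$ by the very definition of adjacency in $\Gamma$, which directly \emph{violates} property~(1), which demands $\dist(U_{j_k},U_{j_{k'}}) > r$ for \emph{all} $k\ne k'$. Indeed the fact that you found (2) ``holds with room to spare'' is a symptom of the problem: the extra slack of $|U_{j_k}|+|U_{j_{k+1}}|+3r$ in (2) exists precisely because the correct sequence is a proper \emph{subsequence} of the shortest path that skips intermediate sets, and one needs to pay for traversing the skipped sets. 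Property~(1) is essential downstream: it is what makes the $U_{j_k}^{+r}$ pairwise disjoint in the proof of Lemma~\ref{lemma:almost_sure_no_infinite_connected_comp_for_R_large}.

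The paper's proof fixes this by extracting a \emph{sparse} sub-path of the shortest path $i_1,\ldots,i_m$, namely a maximal subset containing no two consecutive path-indices. Since the path is shortest (hence induced in $\Gamma$), any two non-consecutive path-indices correspond to sets at distance $>r$, which gives~(1). Maximality of the sparse set forces gaps of length $2$ or $3$ between successive chosen indices, so at most two intermediate sets must be traversed, yielding~(2) once one controls the sizes of the skipped sets; this is where a greedy choice of the sparse set (always preferring the index with largest $|U_i|$) is used to ensure each skipped $|U_{i_{\ell+1}}|$ is dominated by $\max\{|U_{j_k}|,|U_{j_{k+1}}|\}$. Similarly, the chosen sequence need not start at $i_1$ nor end at $i_m$, so~(3) is no longer trivial as you claim, and the same greedy dominance argument is what bounds $\dist(u,U_{j_1})$ and $\dist(v,U_{j_n})$. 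If you want to repair your proposal, you must replace ``take the whole shortest path'' with ``take a maximal sparse subset of the shortest path, chosen greedily by size,'' and then redo (2) and (3) accounting for the skipped sets.
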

\begin{proof}
    Consider the graph $\mathsf{G}$ with vertices given by the index set $I$, in which $i,i^\prime\in I$ are adjacent if and only if $\text{dist}(U_{i},U_{i^\prime}) \le r$. Note that $\mathsf{G}$ is connected, since $U$ is $r$-connected. Let $\{i_1,\ldots,i_m\}$ be a shortest path in $\mathsf{G}$ between $u\in U_{i_1}$ and $v\in U_{i_m}$. The desired index set $\{j_1,\ldots,j_n\}$ will be constructed as a sub-path of $\{i_1,\ldots,i_m\}$. We say that an index set $J\subset I$ is \emph{sparse} if it does not contain any two consecutive integers. Note that for any sparse $J\subset \{i_1,\ldots ,i_m\}$, the sets $\{U_{j}\}_{j\in J}$ are at pairwise distance greater than $r$, as otherwise we will have a contradiction to the fact that $\{i_1,\ldots,i_m\}$ is a shortest path. We construct $\{j_1,\ldots,j_n\}$ in a greedy manner as follows: At step, simply put $J_0 = \emptyset$. At step $k+1$, we construct $J_{k+1}$ by adding to $J_k$ the index $i\in \{i_1,\ldots,i_m\}\setminus J_k$ among which $|U_i|$ is largest and $J_k\cup\{i\}$ is sparse (if there are a few possible choices of indices, just take the smallest one). We stop when this is no longer possible, i.e., when the resulting set is a maximal sparse set. 

    Suppose now that the greedy process stopped after $n$ steps, i.e. that $J=J_n = \{j_1,\ldots,j_n\}$. We want to check that conditions (1), (2) and (3) hold for this index set. Condition (1) is immediate, since $J$ is sparse by construction. To verify condition (2), we first note that if $j_k = i_{\ell}$, then necessarily $j_{k+1} \in \{i_{\ell + 2},i_{\ell + 3} \}$. If $j_{k+1} = i_{\ell +2}$, then we must have
    \[
    |U_{i_{\ell + 1}}| \le \max \{U_{j_{k}},U_{j_{k+1}}\}
    \]
    by the greedy construction. Therefore
    \begin{align*}
        \text{dist}(U_{j_{k}},U_{j_{k+1}} ) &\le \text{dist}(U_{j_{k}},U_{i_{\ell + 1}} ) + |U_{i_{\ell + 1}}| + \text{dist}(U_{i_{\ell + 1}},U_{j_{k+1}} ) \\ & \le r + \max \{U_{j_{k}},U_{j_{k+1}}\} + r \\ &\le 2r + |U_{j_{k}}| + |U_{j_{k+1}}| \, .     
    \end{align*}
    In the case where $j_{\ell + 1} = i_{\ell +3}$, the greedy construction implies that $|U_{j_{k}}| \ge |U_{i_{\ell + 1}}|$ and $|{U_{j_{k+1}}}|\ge |U_{i_{\ell + 2}}|$, and we get that
    \[
    \text{dist}(U_{j_{k}},U_{j_{k+1}} ) \le |U_{i_{\ell+1}}|+ |U_{i_{\ell+2}}| + 3r \le  |U_{j_{k}}| + |U_{j_{k+1}}| + 3r \, .
    \]
    Altogether, condition~(2) holds. It remains to check condition~(3), which is pretty straightforward. If $j_1 = i_1$, then $\text{dist}(u,U_{j_1}) = 0$. Otherwise, the greedy construction implies that $j_1 = i_2$ and that $|U_{j_{1}}| > |U_{i_1}|$, and we get that
    \[
    \text{dist}(u,U_{j_1}) \le |U_{i_1}| + r \le |U_{j_1}| + r \, .  
    \]
    A similar argument shows that $\text{dist}(v,U_{j_n}) \le |U_{j_n}| + r$, and the claim follows. 
\end{proof}

\begin{proof}[Proof of Lemma~\ref{lemma:almost_sure_no_infinite_connected_comp_for_R_large}]
    We need to show that for any $r \ge r_0$, the set $\{v\in V\, : \, R_v > r\}$ almost surely consists only of finite $4r$-connected components.
    We first prove this in the case where $r>r_0$. Fix some $u\in V$ and let $\Gamma_r(u)$ be the $4r$-connected component of $\{v\in V \, : \, R_{v} > r\}$ which contains $u$. We want to show that $\diam(\Gamma_r(u))$ is finite almost surely. Indeed, by the definition~\eqref{eq:def_of_R_v_in_nonamenable_case} of $R_v$, if $\diam(\Gamma_r(u))\ge m$ then there exists a family $\{U_i\}_{i\in I}$ of $4r$-connected sets (one of which contains $u$) such that $U=\bigcup_{i\in I} U_i$ is $4r$-connected, $\diam(U)\ge m$, and such that
    \[
    |\Pi' \cap U_i^{+r} |  < r \, |\Pi \cap U_i| \qquad \text{for all } i\in I\, .
    \]
    By applying Claim~\ref{claim:maximal_shortest_path} to this family (with $4r$ in place of $r$), we conclude that there exists finite sub-family of $4r$-connected sets $\{U_{j}\}_{j=1}^{n}$ which are $4r$-separated and have
    \[
    \sum_{j=1}^{n} |U_j| \ge \frac{m}{30r} \, .
    \]
    Denote by $\cU = \bigcup_{j=1}^{n} U_j$, and note that since the $U_j$'s are $4r$-separated we have
    \begin{equation}        \label{eq:proof_of_lemma_almost_sure_no_infinite_connected_comp_for_R_large_deficiency_of_random_points}
        |\Pi' \cap  \cU^{+r} | < r \, |\Pi \cap \cU| \, .
        \end{equation}
    By \cref{claim:prob_bound_for_bad_point_count_in_U} we know that
    \[
    \P\big(|\Pi' \cap \cU^{+r} | < r \, |\Pi \cap \cU|\big) \le e^{-c |\cU^{+r}|} \, .
    \]
    Denote by $U_0 = \{u\}$, $s_j = |U_j|$ and $d_j = \dist(U_{j-1},U_j)$. Then by items (2) and (3) from Claim~\ref{claim:maximal_shortest_path} we know that $d_j \le s_{j-1} + s_j + 12r$ for all $1\le j \le n$. Furthermore, by denoting
    \[
    s := s_1+\ldots + s_n = \sum_{j=1}^{n}|U_j| = |\mathcal{U}| \, ,
    \]
    we observe that $|\mathcal{U}^{+r}| \ge (1+h)^{r} s$, where $h>0$ is the Cheeger constant of $G$. We will now union bound over all possible choices of $U_1,\ldots,U_n$ as above. Indeed, note that $1\le n\le s$, and that given $U_1,\ldots,U_{j-1}$ we can choose $U_j$ as follows: Pick a point at distance $d_j$ from $U_{j-1}$ (there are at most $s_{j-1} \cdot d^{d_j}$ such choices) and then choose $U_j$ as a $4r$-connected set of size $s_j$ (there are at most $e^{Crs_j}$ such choices, see~\eqref{eq:exponential_bound_on_connected_sets_of_given_size}). The union bound now gives
    \begin{equation}
        \label{eq:bound_on_tail_of_diameter_connected_R_componenet_after_union_bound}
        \P\big(\text{diam}(\Gamma_r(u))\ge m\big) \le \sum_{s = \lfloor m/9r \rfloor}^{\infty} \,  \sum_{n=1}^{s} \bigg(\sum_{s_1+\ldots+s_n = s} \,  \prod_{j=1}^n s_{j-1} d^{s_{j-1} + s_{j} + 12r} e^{Crs_j} \bigg) \, e^{-c(1+h)^{r} s}.
    \end{equation}
    Let us further bound the inner sum in~\eqref{eq:bound_on_tail_of_diameter_connected_R_componenet_after_union_bound}. Clearly, we have
    \[
    \prod_{j=1}^n s_{j-1} d^{s_{j-1} + s_{j} + 12r} e^{Crs_j} \le e^s \cdot d^{2s} \cdot d^{12rn} \cdot e^{Crs} \le e^{Crs} \, ,
    \]
    for some $C>0$ which depends only on the graph $G$.
    Furthermore, a crude bound on binomial coefficients gives
    \[
    \sum_{n=1}^{s}\Big(\sum_{s_1+\ldots+s_n = s} 1\Big)= \sum_{n=1}^{s} \binom{s+n-1}{n-1} \le e^{Cs} \, .
    \]
    Plugging these bounds into~\eqref{eq:bound_on_tail_of_diameter_connected_R_componenet_after_union_bound} yields that
    \[
    \P\big(\text{diam}(\Gamma_r(u))\ge m\big) \le \sum_{s = \lfloor m/9r \rfloor}^{\infty} \big(e^{Cr- c(1+h)^r}\big)^s  \le e^{-\frac cr (1+h)^r m},
    \]
    where the last inequality holds for $r_0$ sufficiently enough. This shows that $\text{diam}(\Gamma_r(u))$ is finite almost surely. Since the number of vertices is countable, we conclude that almost surely $\{v\in V \, : \, R_v>r \}$ does not have an infinite $4r$-connected component, for $r>r_0$.     
    
    It remains to deal with the case $r=r_0$. As in the previous case, we fix some $u\in V$ and denote by $\Gamma_{r_0}(u)$ the $4r_0$-connected component of $\{v\in V \, : \, R_{v}>r_0 \}$ which contains $u$.
    By definition, if $v\in V$ has $R_{v}>r_0$, this means that $B_{r_0}(v) \cap \cB \not=\emptyset$ or $|\Pi \cap \{v\}|>r_0$, where $\cB$ was defined in~\eqref{eq:def_of_bad_points}. Therefore, if $\diam(\Gamma_{r_0}(u)) \ge m$ for $m$ large enough, then we can find $v_1,\ldots,v_n$ such that 
    \begin{enumerate}
        \item[(a)] We have $\text{dist}(u,v_1) \le r_0$;
        \item[(b)] For all $1\le j\le n$ we have $2r_0 < \text{dist}(v_j,\{v_1,\dots,v_{j-1}\}) \le 10r_0$;
        \item[(c)] The length of the path satisfies $n\ge m/(10r_0)$.
        \item[(d)] For all $1\le j\le n$, we have either $v_i \in \cB$ or $|\Pi \cap \{v_i\}|>r_0$.
    \end{enumerate}
    Denote by $\cP_{n}(u)$ the set of all paths which satisfy $(a)$ and $(b)$ and have length $n$.
    By considering the different starting points in $B_{r_0}(u)$ for a path in $\cP_n(u)$, we have 
    \[
    |\cP_n(u)| \le b_{r_0} \cdot b_{10r_0}^{n-1} \le e^{Cr_0 n}
    \]
    for some $C>0$ which depends only on $G$.
    On the other hand, given any path $\{v_1,\ldots,v_n\}\in \cP_n(u)$ and $J \subset \{1,\dots,n\}$, we have that $|\{ v_j : j \in J\}^{+r_0}|=b_{r_0}|J|$
    and thus,~\eqref{eq:prob-bound-too-few-points} implies that 
    \begin{align*}
     \P(\{v_j:j\in J\} \subset \cB)
      &=\P\big(|\Pi'\cap B_{r_0}(v_j)| \le 0.9\, b_{r_0} \ \text{for all } j \in J\big) \\
      &\le \P\big(|\Pi'\cap \{v_j : j \in J\}^{+r_0}| \le 0.9\, b_{r_0}|J| \big) \le e^{-c b_{r_0} |J|} \, .
    \end{align*}
    Also, writing $J':=[n] \setminus J$, \eqref{eq:concentration_overcrowding} implies that
    \[ \P(|\Pi \cap \{v_j\}|>r_0\text{ for all }j \in J') \le \P(|\Pi \cap \{v_j : j \in J'\}|>r_0|J'|) \le e^{-\frac12|J'|r_0\log (r_0/2)}. \]
    Altogether, we get that
    \begin{align*}
        \P\big(\diam(\Gamma_{r_0}(u))\ge m\big)
         &\le \sum_{n\ge m/(10r_0)} |\cP_{n}(u)| \sum_{J \subset [n]} e^{-c b_{r_0} |J|} e^{-\frac12(n-|J|)r_0\log(r_0/2)} \\ 
         &\le \sum_{n\ge m/(10r_0)} e^{Cr_0 n} \cdot 2^n \cdot e^{-\min\{c b_{r_0},\frac12 r_0\log (r_0/2)\} \cdot n} \le 2^{-cm/r_0} \, ,
    \end{align*}
    where the last inequality holds for $r_0$ large enough. This shows that $\text{diam}(\Gamma_{r_0}(u))$ is finite almost surely, and the proof of the lemma is completed.
\end{proof}

\section{Sufficient conditions for~\eqref{eq:assumption_total_order}}
\label{sec:sufficient_conditions_for_total_order}

In this section we provide a sufficient condition for~\eqref{eq:assumption_total_order} to hold. Before doing so, let us elaborate on why the latter is assumed in our main result Theorem~\ref{thm:factor_matching_for_transitive_non-amenable_graphs}.
As mentioned in the introduction, factor matchings need not exist for general graphs. To demonstrate this, consider, for instance, the infinite ladder graph with added diagonals edges (see Figure~\ref{figure:ladder_with_diagonals}). 
Formally, this is a Cayley graph on $\Z\times\Z_2$ with generating set 
\[
\big\{(-1,0), (1,0), (0,1), (1,1), (-1,1)\big\}. 
\]  
The map which flips two vertical vertices and leaves all other vertices put is a graph automorphism. Therefore, if $|\Pi \cap \{v_0\}|=|\Pi \cap \{v_1\}|$ for some vertical pair of vertices $\{v_0,v_1\}$, then no equivariant matching between $\Pi$ and $V$ can have a matched edge crossing $\{v_0,v_1\}$ (i.e., every matched edge must be either contained in $\{v_0,v_1\}$ or disjoint from it).
On the other hand, if $|\Pi \cap \{v_0,v_1\}| \neq 2$ then, clearly, there must be a matched edge that crosses $\{v_0,v_1\}$.
Of course, for the Poisson process (or any reasonable perturbed vertex set), almost surely, both conditions will occur simultaneously, and hence an equivariant matching would be impossible in this case. 
An example of such a non-amenable Cayley graph is obtained in the same manner by replacing $\Z$ with a free group.
By requiring~\eqref{eq:assumption_total_order}, we ensure that the symmetry between different vertices is broken, so that such issues do not arise.
\vspace{3mm}
\begin{figure}[h!]
    \begin{center}	\scalebox{0.4}{\includegraphics{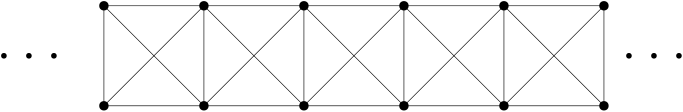}}
    \end{center}
    \caption{The infinite ladder graph with added diagonal edges.}
    \label{figure:ladder_with_diagonals}
\end{figure}

We now give a sufficient condition for the assumption~\eqref{eq:assumption_total_order}.
In fact, we will construct the required process $(\cO_v)$ as a factor of a single point process $\Pi$, rather than the pair $(\Pi,\Pi')$.
Recall that a perturbed vertex set is a multiset of the form $\Pi=\{X_v : v \in V\}$, where $(X_v)_{v \in V}$ are independent random variables taking values in $V$, and $\gamma(X_v)$ has the same law as $X_{\gamma(v)}$ whenever $\gamma$ is an automorphism of $G$. Denote by $S_r(v)$ the sphere of radius $r$ around a vertex $v$ in $G$.
        
\begin{lemma}\label{lem:distinct-point-counts}
    Let $G=(V,E)$ be a locally finite transitive connected graph such that $S_r(v) \neq S_r(u)$ for every distinct $u,v\in V$ and $r\ge 0$.    
    Let $\Pi$ be either a Poisson process, or a perturbed vertex set satisfying that there exists $c>0$ such that for every distinct $u,v\in V$ and $r\ge 0$ there exists $w \in V$ such that 
    \begin{equation} \label{eq:lem:distinct_point_count_condition_on_w}
        \text{\normalfont Var}\Big(\mathbf{1}_{\{X_{w} \in S_r(u) \setminus S_r(v)\}} \Big) \ge c.
    \end{equation}
    Then a.s.\ for any two distinct vertices $u,v\in V$ there exists $r\ge 0$ such that $|\Pi \cap S_r(v)| \neq |\Pi \cap S_r(u)|$.
\end{lemma}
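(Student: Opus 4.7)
The plan is to fix distinct vertices $u,v\in V$ and prove that the event
\[
E_{u,v} \;:=\; \bigl\{|\Pi\cap S_r(u)|=|\Pi\cap S_r(v)|\text{ for every }r\ge 0\bigr\}
\]
has probability zero; a countable union bound over unordered pairs $\{u,v\}$ then yields the lemma. Set $D_r:=|\Pi\cap S_r(u)|-|\Pi\cap S_r(v)|$. The key idea is to identify an infinite collection of radii $r_1<r_2<\cdots$ together with pairwise distinct ``witness'' vertices $w_i$ so that, after conditioning on everything except the randomness at $w_i$, the constraint $D_{r_i}=0$ forces that randomness to take a specific value, which occurs with probability uniformly bounded away from $1$.

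The choice of witnesses differs slightly between the two models. In the Poisson case, the standing assumption $S_r(u)\neq S_r(v)$ provides, for every $r$, a vertex in $S_r(u)\triangle S_r(v)$; since the sets $\{r:S_r(u)\setminus S_r(v)\neq\emptyset\}$ and $\{r:S_r(v)\setminus S_r(u)\neq\emptyset\}$ cover all of $\N$, at least one is infinite, so by passing to a subsequence I may assume all chosen $w_i$ lie in $S_{r_i}(u)\setminus S_{r_i}(v)$, in which case $d(w_i,u)=r_i$ and the $w_i$ are automatically distinct. In the perturbed case the variance hypothesis supplies, for every $r$, a witness $w$ with $\P(X_{w}\in S_r(u)\setminus S_r(v))\in[c_1,1-c_1]$ for some $c_1=c_1(c)>0$. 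Since $\sum_{r\ge 0}\P(X_w\in S_r(u))=1$ for every fixed $w$, each vertex can witness the variance condition at only finitely many radii, which allows a greedy construction of increasing $r_i$ together with pairwise distinct witnesses $w_i$.

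With $(r_i,w_i)$ in hand, the plan is to apply L\'evy's upward theorem to the filtration $\mathcal{G}_k$ generated by all the underlying randomness except $N_{w_j}$ (Poisson case) or $X_{w_j}$ (perturbed case) for $j\ge k$. Then $\mathcal{G}_k$ increases to the full $\sigma$-algebra and $\P(E_{u,v}\mid\mathcal{G}_k)\to\mathbf{1}_{E_{u,v}}$ almost surely. Since $E_{u,v}\subset\{D_{r_k}=0\}$, it suffices to show that
\[
\P(D_{r_k}=0\mid\mathcal{G}_k)\;\le\; 1-c' \qquad \text{a.s.}
\]
for some $c'>0$ independent of $k$. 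Conditioning further on $(N_{w_j})_{j>k}$ (respectively $(X_{w_j})_{j>k}$), the random variable $D_{r_k}$ reduces to a constant plus the sole remaining random contribution from $w_k$. In the Poisson case this contribution is $\pm N_{w_k}$, so $\{D_{r_k}=0\}$ forces $N_{w_k}$ to equal a specific integer, which has probability at most $\max_n\P(\mathrm{Poisson}(1)=n)=e^{-1}$. In the perturbed case the contribution is $\phi_{w_k}(r_k):=\mathbf{1}_{\{X_{w_k}\in S_{r_k}(u)\}}-\mathbf{1}_{\{X_{w_k}\in S_{r_k}(v)\}}\in\{-1,0,1\}$; a short case analysis using $\P(\phi_{w_k}(r_k)=1)\in[c_1,1-c_1]$ shows that $\phi_{w_k}(r_k)$ attains no single value in $\{-1,0,1\}$ with probability exceeding $1-c_1$. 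Averaging back over the extra conditioning and letting $k\to\infty$ yields $\mathbf{1}_{E_{u,v}}\le 1-c'<1$ a.s., hence $\P(E_{u,v})=0$.

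The part I expect to be most delicate is securing the pairwise distinctness of the witnesses $w_i$ in the perturbed case; the rest is a fairly standard backward-conditioning argument via L\'evy's theorem. Once distinctness is in hand, the independence of $\{X_{w_i}\}_i$ (or $\{N_{w_i}\}_i$) makes the iterative bound on $\P(D_{r_k}=0\mid\mathcal{G}_k)$ essentially automatic.
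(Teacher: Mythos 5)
Your proposal is correct, and it takes a genuinely different route from the paper. The paper fixes distinct $u,v$, sets $E_k = \{|\Pi\cap S_{r_k}(u)|\neq |\Pi\cap S_{r_k}(v)|\}$ and $F_k = E_1^c\cap\cdots\cap E_{k-1}^c$, and proves $\P(\bigcup_k E_k)=1$ by showing $\P(E_k\mid F_k)\ge c/4$ uniformly. The mechanism there is spatial: $r_k$ is chosen large enough (depending quantitatively on $\P(F_k)$) so that the witness $w$ for radius $r_k$ sits far from both $u$ and $v$, and consequently the conditional law of $X_w$ given $F_k$, restricted to sets away from $B_{r_{k-1}}(u)\cup B_{r_{k-1}}(v)$, is within a factor of $2$ of its unconditional law; this lets them lower-bound $\P(E_k\mid F_k)$ directly. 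Your argument instead runs a backward-conditioning martingale: take a filtration $\mathcal{G}_k$ that withholds the randomness at the witness vertices $w_j$, $j\ge k$, apply L\'evy's upward theorem to get $\P(E_{u,v}\mid\mathcal{G}_k)\to\mathbf 1_{E_{u,v}}$, and show the conditional probability that $D_{r_k}=0$ given $\mathcal{G}_k$ is uniformly at most $1-c'$, forcing $\mathbf 1_{E_{u,v}}=0$ a.s. Both proofs are powered by the same heuristic (fresh randomness at each scale), but you replace the paper's quantitative choice of $r_k$ and conditional-law comparison with the independence structure made explicit via the filtration; the price is that you need the witnesses $w_k$ to be pairwise distinct. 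Your handling of that point is sound: in the Poisson case $w_k\in S_{r_k}(u)\setminus S_{r_k}(v)$ forces $d(w_k,u)=r_k$, and in the perturbed case the bound $\P(X_w\in S_r(u))\ge c_1$ together with $\sum_r \P(X_w\in S_r(u))=1$ shows any fixed $w$ can witness at most $\lfloor 1/c_1\rfloor$ radii, so a greedy subsequence works. The case analysis on $\phi_{w_k}(r_k)\in\{-1,0,1\}$ is also correct since $\P(\phi=1)\in[c_1,1-c_1]$ bounds each of $\P(\phi=1)$, $\P(\phi=0)=1-\P(\phi=1)-\P(\phi=-1)$, and $\P(\phi=-1)\le 1-\P(\phi=1)$ by $1-c_1$.
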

\noindent
As we already mentioned in the introduction, it is straightforward to verify the conditions of Lemma~\ref{lem:distinct-point-counts} when $G$ is a $d$-regular tree and $\Pi$ is a Poisson process or a non-degenerate perturbed vertex set.

\begin{corollary}
    Under the assumptions of Lemma~\ref{lem:distinct-point-counts} we have that~\eqref{eq:assumption_total_order} holds.
\end{corollary}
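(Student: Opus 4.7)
The plan is to define $(\cO_v)_{v \in V}$ as a deterministic injective encoding of the sphere-count vector seen from $v$ into a single real number, and then invoke \cref{lem:distinct-point-counts} to deduce distinctness.

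Concretely, I would fix once and for all a Borel-measurable injection $\phi : \N^{\N} \hookrightarrow \R$ (any standard construction will do; for example, one can encode a sequence $(a_r)_{r \ge 0}$ of nonnegative integers into the binary expansion of a number in $[0,1]$ via a comma/separator code), and then set
\[
\cO_v \;:=\; \phi\Big(|\Pi \cap S_0(v)|,\ |\Pi \cap S_1(v)|,\ |\Pi \cap S_2(v)|,\ \dots\Big).
\]
By construction $\cO_v$ is a deterministic function of $\Pi$ and of $v \in V$, and is therefore a factor of $\Pi$, hence a fortiori of $(\Pi,\Pi')$.

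For equivariance, the key observation is that any automorphism $\gamma$ of $G$ sends $S_r(v)$ to $S_r(\gamma(v))$, so $|\gamma(\Pi) \cap S_r(\gamma(v))| = |\Pi \cap S_r(v)|$ for every $r \ge 0$; feeding this identity into $\phi$ yields $\cO_{\gamma(v)}(\gamma(\Pi)) = \cO_v(\Pi)$, as required.

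Distinctness is where \cref{lem:distinct-point-counts} enters: it asserts that, almost surely, for every two distinct vertices $u,v \in V$ there is some $r \ge 0$ with $|\Pi \cap S_r(u)| \neq |\Pi \cap S_r(v)|$, and the injectivity of $\phi$ then forces $\cO_u \neq \cO_v$. Since $V$ is countable, a single exceptional null set handles all pairs simultaneously. I expect no substantive obstacle in this corollary; the only minor technical choice is picking a concrete measurable injection $\phi$, which is entirely routine, so the whole argument amounts to short bookkeeping on top of \cref{lem:distinct-point-counts}.
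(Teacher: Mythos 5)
Your proposal is correct and follows the paper's proof almost verbatim: both encode the sphere-count sequence $(|\Pi \cap S_r(v)|)_{r \ge 0}$ via a measurable injection $\N^\N \hookrightarrow \R$ (the paper's $\psi$ uses exactly the unary/separator binary encoding you sketch) and invoke \cref{lem:distinct-point-counts} together with countability of $V$ to get almost-sure distinctness. The only difference is that you spell out the equivariance check explicitly, which the paper leaves implicit in calling $Z_v$ a factor.
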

\begin{proof}
    The map
    \[
    Z_v = \big(|\Pi\cap S_r(v)|\big)_{k=0}^\infty
    \]
    is a $\N^\N$-valued factor. By Lemma~\ref{lem:distinct-point-counts}, almost surely $Z_u\not=Z_v$ for all $u\not=v$. The corollary follows by choosing a measurable injection $\psi \colon \N^\N \to [0,1]$ and taking the composition $\cO_v = \psi(Z_v)$, which gives the desired factor $(\cO_v)$ of distinct real numbers. 
    Such a $\psi$ is given, for instance, by
    \[
    \psi(a_1,a_2,\ldots) = \sum_{k=1}^\infty \frac{d_k}{2^k} \, , 
    \]
    with the binary digits $(d_k)_{k=1}^\infty$ defined by
    \[
    d_1d_2d_3\ldots = \underbrace{1\cdots1}_{a_1\text{ times}}\,0\,
    \underbrace{1\cdots1}_{a_2\text{ times}}\,0\,
    \underbrace{1\cdots1}_{a_3\text{ times}}\,0\cdots. \qedhere
    \]
\end{proof}

\begin{proof}[Proof of Lemma~\ref{lem:distinct-point-counts}]
    We handle the case when $\Pi$ is a perturbed vertex set; the case when $\Pi$ is a Poisson process is similar (and much simpler).    
    Let $\{r_k\}$ be a subsequence that we choose later and denote by 
    \[
    E_k = \big\{ |\Pi\cap S_{r_k}(u)| \neq |\Pi\cap S_{r_k}(v)| \big\} \, .
    \]
    We want to show that $ \P(\bigcup_{k} E_k ) = 1$, which will follow immediately once we show that 
    \begin{equation} \label{eq:proof_of_lemma_distinct_points_lower_bound_on_conditional_prob}
        \inf_{k\ge 1} \P\big( E_k \mid E_1^c\cap \ldots \cap E_{k-1}^c \big) \ge \tfrac14 c \, .
    \end{equation}
    Fix some $k\ge 1$ and denote by $F_k = E_1^c\cap \ldots \cap E_{k-1}^c$. We can choose $r_k \ge 4 \max\{ r_{k-1}, \text{dist}(u,v)\}$ large enough so that 
    \begin{equation*}
        \P \big(\text{dist}(X_w,w) \ge \tfrac12 r_k \big) \le \tfrac12 \min\{c,\P(F_k)\} \, ,
    \end{equation*}
    for all $w$. By our assumption, there exists $w$ such that~\eqref{eq:lem:distinct_point_count_condition_on_w} holds with $r=r_k$.
    In particular, $\P(X_w \in S_{r_k}(u) \setminus S_{r_k}(v)) \ge c$, and hence, $\text{dist}(w, S_{r_k}(u) \setminus S_{r_k}(v)) \le r_k/2$. Thus, $\text{dist}(w,u) \ge r_k/2 \ge r_{k-1}$, and $\text{dist}(w,v) \ge \text{dist}(w,u) - \text{dist}(u,v) \ge r_k/4 \ge r_{k-1}$.
    Therefore,
    \begin{equation} \label{eq:proof_of_lemma_distinct_point_choice_of_r_k}
        \P \big(X_w \in B_{r_{k-1}}(u) \cup B_{r_{k-1}}(v) \big) \le \tfrac12 \min\{c,\P(F_k)\} \, ,
    \end{equation}
    
    Denoting by $\Pi^w =\{ X_v \, : \, v\not=w \}$, we have
    \begin{multline*}
    \mathbf{1}_{E_k} \ge \mathbf{1}_{\{X_w \in S_{r_k}(u) \setminus S_{r_k}(v)\}} \cdot \mathbf{1}_{\{|\Pi^w \cap S_{r_k}(v)| \le |\Pi^w \cap S_{r_k}(u)| \}} \\ +  \mathbf{1}_{\{X_w \notin S_{r_k}(u) \setminus S_{r_k}(v)\}} \cdot \mathbf{1}_{\{|\Pi^w \cap S_{r_k}(v)| > |\Pi^w \cap S_{r_k}(u)|\}} \, .
    \end{multline*}
    Noting that $X_w$ is independent of $\Pi^w$, we can take the conditional expectation over $F_k$ and observe from the above that
    \begin{equation} \label{eq:proof_of_lemmma_distinct_points_lower_bound_after_case_decomposition}
    \P(E_k \mid F_k )\ge \min\Big\{ \P\big(X_w \in S_{r_k}(u) \setminus S_{r_k}(v) \mid F_k \big) , \, \P\big(X_w \not \in S_{r_k}(u) \setminus S_{r_k}(v) \mid F_k \big)  \Big\} \, .    
    \end{equation}
    We now lower bound each of the terms on the right-hand side.
    The value of $X_w$ is irrelevant for the occurrence of $F_k$ on the event that $X_w \notin B_{r_{k-1}}(u) \cup B_{r_{k-1}}(v)$, and hence, for any $A \subset V \setminus (B_{r_{k-1}}(u) \cup B_{r_{k-1}}(v))$,
    \begin{align*}
        \P\big(X_w \in A \mid F_k \big) &= \P\big(X_w \in A \big) \cdot \frac{\P\big(F_k \mid X_w \in A \big)}{\P(F_k)} \\
        &= \P\big(X_w \in A \big) \cdot \frac{\P\big(F_k \mid X_w \not \in B_{r_{k-1}}(u) \cup B_{r_{k-1}}(v) \big)}{\P(F_k)} \\
        & \ge  \P\big(X_w \in A \big) \cdot \frac{\P(F_k) - \P\big( X_w \in B_{r_{k-1}}(u) \cup B_{r_{k-1}}(v) \big)}{\P(F_k)} \stackrel{\eqref{eq:proof_of_lemma_distinct_point_choice_of_r_k}}\ge \frac12 \, \P\big(X_w \in A \big) \, .
    \end{align*}
    On the other hand, taking $A=S_{r_k}(u) \setminus S_{r_k}(v)$, we get that $\P\big(X_w \in S_{r_k}(u) \setminus S_{r_k}(v) \mid F_k \big) \ge c/2$.
    Taking $A=(S_{r_k}(u) \setminus S_{r_k}(v))^c \setminus (B_{r_{k-1}}(u) \cup B_{r_{k-1}}(v))$, we get that
    \begin{align*}
       \P\big(X_w \notin S_{r_k}(u) \setminus S_{r_k}(v) \mid F_k \big) &\ge \P\big(X_w \in  (S_{r_k}(u) \setminus S_{r_k}(v))^c \setminus (B_{r_{k-1}}(u) \cup B_{r_{k-1}}(v))  \mid F_k \big) \\ & \ge \frac{1}{2} \P\big(X_w \in  (S_{r_k}(u) \setminus S_{r_k}(v))^c \setminus (B_{r_{k-1}}(u) \cup B_{r_{k-1}}(v))  \big) \\ &\ge \frac{1}{2} \Big(\P\big(X_w \in  (S_{r_k}(u) \setminus S_{r_k}(v))^c   \big) -  \P \big(X_w\in (B_{r_{k-1}}(u) \cup B_{r_{k-1}}(v) \big) \Big) \\  &\stackrel{\eqref{eq:proof_of_lemma_distinct_point_choice_of_r_k}}\ge \frac14 c . 
    \end{align*}
    Thus, \eqref{eq:proof_of_lemma_distinct_points_lower_bound_on_conditional_prob} follows from~\eqref{eq:proof_of_lemmma_distinct_points_lower_bound_after_case_decomposition}, and we are done. 
\end{proof}

\bibliographystyle{abbrv}
\bibliography{matching}

\bigskip 
\bigskip

\noindent 
Yinon Spinka \\ School of Mathematical Sciences, Tel Aviv University, Israel. \\ Email: {\tt yinonspi@tauex.tau.ac.il} 

\bigskip
\bigskip

\noindent 
Oren Yakir \\ Department of Mathematics, Massachusetts Institute of Technology, USA. \\ Email: {\tt oren.yakir@gmail.com}

\end{document}